\theoremstyle{plain}
\newtheorem{THEOREM}{Theorem}[section]
\newtheorem{theorem}[THEOREM]{Theorem}
\newtheorem{lemma}[THEOREM]{Lemma}
\newtheorem{proposition}[THEOREM]{Proposition}
\theoremstyle{definition}
\theoremstyle{remark}
\newtheorem{remark}[THEOREM]{Remark}
\newcommand{\thm}[1]{Theorem~\ref{#1}}
\newcommand{\lem}[1]{Lemma~\ref{#1}}
\newcommand{\prop}[1]{Proposition~\ref{#1}}
\newcommand{\sect}[1]{Section~\ref{#1}}
\def \a {\alpha}
\def \b {\beta}
\def \d {\delta}
\def \e {\varepsilon}
\def \n {\nabla}
\def \t {\tau}
\def \th {\theta}
\def \D {\Delta}
\def \L {\Lambda}
\def \O {\Omega}
\def \be {{\bf e}}
\def \bj {{\bf j}}
\def \bk {{\bf k}}
\def \cC {\mathcal{C}}
\def \cL {\mathcal{L}}
\def \dH {\dot{H}}\def \dW {\dot{W}}
\def \rmin{\underline{\rho}}
\def \rmax{\overline{\rho}}
\newcommand{\N}{\ensuremath{\mathbb{N}}}   %%% naturals
\newcommand{\Z}{\ensuremath{\mathbb{Z}}}   %%% integers
\newcommand{\T}{\ensuremath{\mathbb{T}}}   %%% torus
\def \lan {\langle}
\def \ran {\rangle}
\def \p {\partial}
\def \ss {\subset}
\def \dx  {\, \mbox{d}x}
\def \dxi  {\, \mbox{d}\xi}
\def \dy  {\, \mbox{d}y}
\def \dz  {\, \mbox{d}z}
\def \ds  {\, \mbox{d}s}
\def \dth  {\, \mbox{d}\th}
\def \ddt  {\frac{\mbox{d\,\,}}{\mbox{d}t}}
\begin{document}

\title[Well-posedness of topological models]{Local well-posedness of the topological Euler alignment models of collective behavior}

\author{David N. Reynolds}

\author{Roman Shvydkoy}
\address{Department of Mathematics, Statistics and Computer Science, University of Illinois at Chicago, 60607}

\email{dreyno8@uic.edu}
\email{shvydkoy@uic.edu}

\subjclass{92D25, 35Q35, 76N10}

\date{\today}

\thanks{\textbf{Acknowledgment.}  
	This work was  supported in part by NSF
	grant DMS-1813351.}

\begin{abstract}
In this paper we address the problem of well-posedness of multi-dimensional topological Euler-alignment models introduced in \cite{ST-topo}. The  main result demonstrates local existence and uniqueness of classical solutions in class $(\rho,u) \in H^{m+\a} \times H^{m+1}$ on the periodic domain $\T^n$, where $0<\a<2$ is the order of singularity of the topological communication kernel $\phi(x,y)$, and $m = m(n,\a)$ is large. Our approach is based on new sharp coercivity estimates for the topological alignment operator 
\[
\cL_\phi f(x) = \int_{\T^n} \phi(x,y) (f(y) - f(x) ) \dy,
\]
which render proper a priori estimates and help stabilize viscous approximation of the system. In dimension 1, this result, in conjunction with the technique developed in \cite{ST-topo} gives global well-posendess in the natural space of data mentioned above. 
\end{abstract}

\maketitle

\tableofcontents

\section{Introduction}

Several recent field studies on animal behavior revealed that in some cases communication 
between species is regulated by topological distance metric, which depends on the number of other species in close proximity rather than their Euclidean distance, see \cite{Bal2008,Wa2017} and references therein. Kinetic models 
interpreting such topological interactions as the $K$-nearest neighbor rule were studied at length by Blanchet and Degond in \cite{BD2016,BD2017}. In \cite{Ha2013} Haskovec defines topological asymmetric ``distance" between agents $x$ and $y$ by counting all agents in the ball of radius $|x-y|$ centered at $x$. It is shown that the classical Cucker-Smale model \cite{CS2007a,CS2007b} with kernel depending on such distance aligns under a global in time graph connectivity assumption -- one that is guaranteed to hold, for instance, for metric models with long range interactions given by $\phi(r) = \frac{H}{(1+r^2)^{\b/2}}$, $\b\leq 1$. Establishing flocking behavior under a strictly local communication rule, however, continues to be a challenging mathematical problem in the theory of collective motion.

\begin{figure}
	\includegraphics[width=5in]{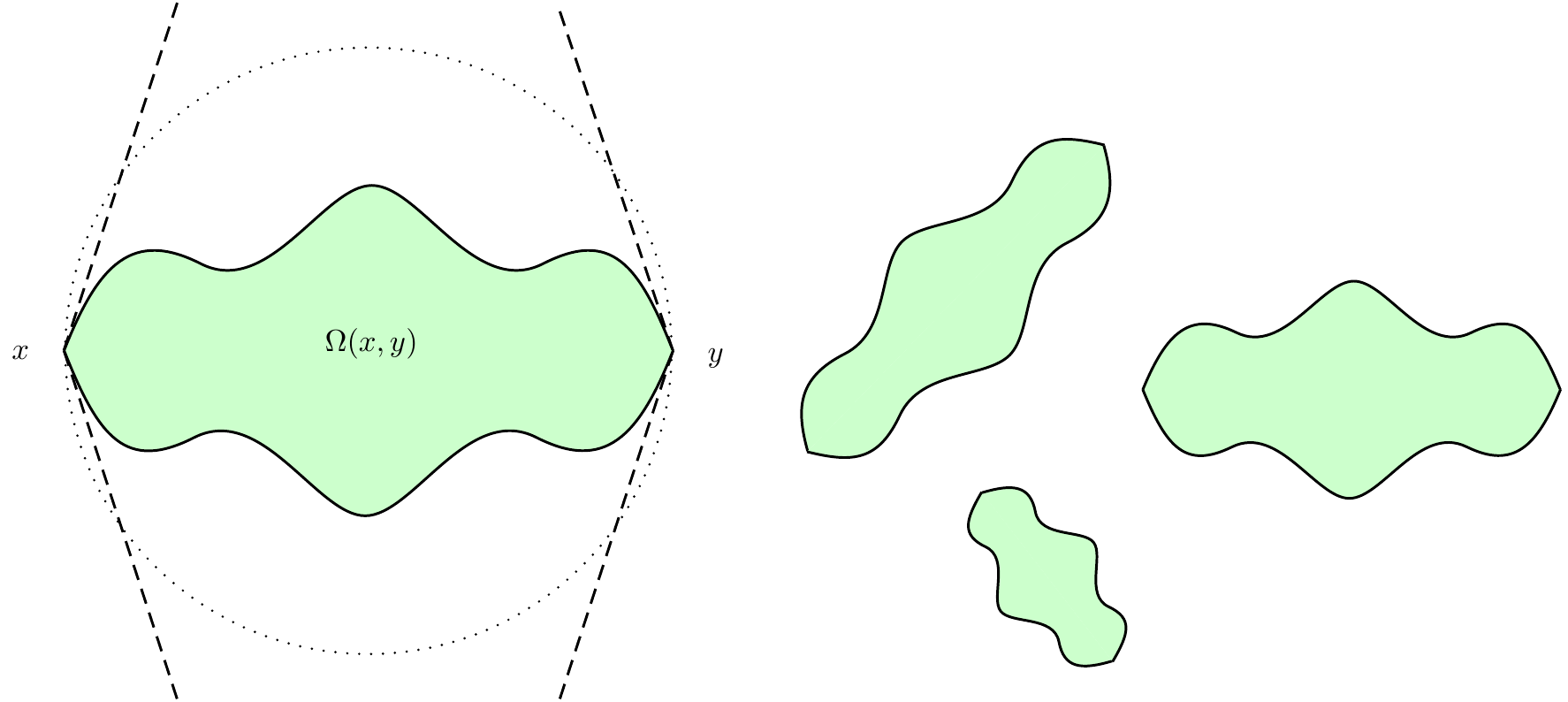}
	\caption{Communication domain satisfying assumptions (D1)--(D3)}\label{f:domain}
\end{figure}

In the context of hydrodynamic Cucker-Smale model, also known as the Euler alignment system, given by 
\begin{equation}\label{e:main}
\left\{
\begin{split}
\rho_t + \n \cdot (\rho u) & = 0, \\
u_t + u \cdot \n u &= \int_{\T^n}\phi(x,y)(u(t,y) - u(t,x))  \rho(t,y)\dy,
\end{split} \right. 
\end{equation}
a new \emph{local} and \emph{symmetric} kernel was introduced in \cite{ST-topo} with a 
mix of topological and metric components. Specifically, it is postulated that the communication strength between agents $(x,y)$ is inversely proportional to the mass of a symmetric region $\O(x,y) = \O(y,x)$  at time $t$ which is encoded into the topological quasi-distance function
\[
d(x,y) = \left(\int_{\O(x,y)} \rho(\xi,t) \dxi \right)^{1/n}.
\]
We define $\phi(x,y)$ as a non-convolution type singular kernel of degree $0<\a<2$ by
\begin{equation}\label{eq:kernel}
\phi(x,y)=\frac{h(x-y)}{|x-y|^{n+\alpha-\tau}d^{\tau}(x,y)},
\end{equation}
where $h = h(r)$ is a radial smooth bump function supported on a ball of radius $r_0$ -- a  communication cutoff scale, and $\t>0$ is a parameter that gauges  presence of topological effects in the system. Although the communication domain considered in \cite{ST-topo} is a specific football shaped body of revolution the results extend to any family of domains obtained by scaling of the basic domain $\O_0 = \O(-\be_1,\be_1)$ such that
\begin{itemize}
	\item[(D1)] $\p \O_0$ is smooth except at $\pm \be_1$ where it is Lipschitz of conical opening of degree $<\pi$,
	\item[(D2)] $\O_0 = - \O_0$,
	\item[(D3)] $\O_0 \ss B_1(0)$.
\end{itemize} 
Figure~\ref{f:domain} shows  example of a typical domain. If the topological component of the communication is sufficiently strong, then  all classical non-vacuous solutions to \eqref{e:main} align.
\begin{theorem}[\cite{ST-topo}]
Suppose $\t \geq n$. Then any classical solution $(u,\rho)$ to \eqref{e:main} on the torus $\T^n$ satisfying the hydrodynamic connectivity condition
\begin{equation}\label{e:connect}
\rho(x,t) \gtrsim \frac{1}{1+t}
\end{equation}
aligns to its conserved momentum $\bar{u}$ at a logarithmic rate
\[
\|u(t) - \bar{u} \|_\infty \lesssim \frac{1}{\sqrt{\ln t}}.
\]
\end{theorem}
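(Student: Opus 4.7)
The plan is to follow the classical entropy/energy strategy for Cucker--Smale-type models, adapted to the singular topological kernel $\phi$. Pairing the momentum equation against $\rho(u-\bar{u})$, using the continuity equation to eliminate the transport term, and symmetrizing via $\phi(x,y)=\phi(y,x)$, one arrives at the identity
\[
\frac{d}{dt}\int_{\T^n}\rho|u-\bar{u}|^2\dx \;=\; -\iint_{\T^n\times\T^n}\phi(x,y)|u(x)-u(y)|^2\rho(x)\rho(y)\dx\dy.
\]
Setting $\cE(t):=\int_{\T^n}\rho|u-\bar{u}|^2\dx$, this shows monotone decay of the kinetic fluctuation and furnishes the dissipation that should drive alignment.

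The essential analytic input is a lower bound on $\phi$ on the support of the cutoff $h$. Mass conservation yields $d^n(x,y)=\int_{\O(x,y)}\rho\le M$, so $d^\tau\le M^{\tau/n}$, and therefore
\[
\phi(x,y)\;\ge\;\frac{c\,h(x-y)}{|x-y|^{n+\alpha-\tau}},\qquad c=c(M,\tau,n).
\]
This deterministic bound, combined with the time-dependent density bound $\rho\gtrsim(1+t)^{-1}$, should yield a $\rho$-weighted Poincar\'e--type inequality
\[
\iint\phi(x,y)|u(x)-u(y)|^2\rho(x)\rho(y)\dx\dy\;\gtrsim\;\lambda(t)\,\cE(t),
\]
with an explicit $\lambda(t)$ degrading in $(1+t)$. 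I would obtain it by covering $\T^n$ by a finite family of balls of radius $r_0/2$ and chaining localized Poincar\'e inequalities on neighboring balls, where the density lower bound supplies quantitative graph connectivity of the cover. The assumption $\tau\ge n$ enters precisely at the threshold where the nonlocal factor $d^\tau$ can be absorbed against the density factors $\rho(x)\rho(y)$ in the dissipation, making the resulting spectral-gap dimensionally balanced and independent of finer features of $\rho$.

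Inserting this Poincar\'e inequality into the energy identity and integrating in time should give the logarithmic decay $\cE(t)\lesssim 1/\ln t$. To promote this $L^2$-flatness to the pointwise bound $\|u(t)-\bar{u}\|_\infty\lesssim (\ln t)^{-1/2}$ one applies a maximum-principle argument to the momentum equation: at a point $x^\ast(t)$ where $|u-\bar{u}|$ attains its maximum, the convective term vanishes and $\cL_\phi$ contracts the extremum at a rate controlled by $\sqrt{\cE(t)}$ multiplied by an integral of $\phi\rho$; optimizing the interaction scale converts the $L^2$ decay into the $L^\infty$ rate stated in the theorem. The principal obstacle is, unmistakably, the weighted Poincar\'e step: one must reconcile simultaneously the diagonal singularity of $\phi$, the evolving nonlocal factor $d^\tau$, and the time-decaying density bound, and it is exactly here that the threshold $\tau\ge n$ does its work.
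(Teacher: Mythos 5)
This theorem is quoted from \cite{ST-topo} and is not proved in the present paper, so your attempt can only be measured against the cited source and its own internal logic. The opening energy identity is fine (up to a factor $1/2$): it is the standard symmetrization and holds for any symmetric kernel. The argument breaks down immediately afterwards, and at exactly the point you flag as "the principal obstacle."

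The fatal step is discarding the topological factor via $d^{\tau}\le M^{\tau/n}$. This reduces $\phi$ to a purely \emph{metric} local kernel, bounded below by a constant on $\{|x-y|\le r_0/2\}$, and note that nothing in this reduction, nor in the subsequent ball-covering/chaining Poincar\'e argument, uses $\tau\ge n$ — so the hypothesis is in fact never invoked in your written proof, contrary to your closing claim that it "does its work" in the Poincar\'e step. Worse, once the kernel is metric you must pay the connectivity bound twice, $\rho(x)\rho(y)\gtrsim (1+t)^{-2}$, and (after a chaining inequality that additionally requires an upper bound on $\rmax(t)$, which is \emph{not} among the hypotheses — the density may concentrate) you arrive at
\[
\ddt \cE \le -\frac{c}{(1+t)^{2}}\,\cE .
\]
Since $\int_0^\infty (1+t)^{-2}\dt<\infty$, Gr\"onwall gives $\cE(t)\ge \cE(0)e^{-c}$: no decay at all. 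The whole point of $\tau\ge n$ in \cite{ST-topo} is that it must \emph{not} be thrown away: with $\tau\ge n$ the weight $\phi(x,y)\rho(y)\dy$ is bounded below after passing to mass coordinates (in 1D, $\rho(y)\dy/d^{n}\sim \dm/m$), i.e.\ the communication strength is normalized independently of how concentrated $\rho$ is, so only a single factor $(1+t)^{-1}$ is paid. Finally, the logarithmic rate itself tells you a linear spectral-gap inequality cannot be the mechanism: $\|u-\bar u\|_\infty\lesssim (\ln t)^{-1/2}$ comes from a Riccati-type inequality of the form $\ddt \cV \lesssim -\frac{1}{1+t}\cV^{2}$ for the (squared) velocity fluctuation, whose quadratic right-hand side reflects that a local kernel only dissipates efficiently while the fluctuation is large. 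Your linear inequality, even with the optimal rate $(1+t)^{-1}$, would predict polynomial decay, which is not what the theorem asserts; the concluding maximum-principle paragraph does not repair this, as it presupposes the $L^2$ decay you have not obtained.
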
 
It is also shown that in 1D the condition \eqref{e:connect} holds automatically for all time, so the model exhibits unconditional alignment in this case. 

Regularity theory of metric models \eqref{e:main}, i.e. where $\phi(x,y) = \phi(|x-y|)$ has been developed extensively in  \cite{CCTT2016,CCMP2017,DMPW2019,HeT2017,DKRT2018,ST1,ST2,ST3,Shv2018,Tan2017,TT2014}, and is most completely understood only in one dimensional settings due to an extra conserved quantity
\begin{equation}
	e = u_x + \phi \ast \rho, \quad e_t + (u e)_x = 0,
\end{equation}
which allows to directly control the slope of $u$. For the smooth kernel case this leads to Burgers' type threshold condition $e_0 \geq 0$ to guarantee global existence. For singular communication, $\phi(r) = \frac{1}{r^{1+\a}}$, additional parabolic regularization leads to global existence and flocking for any smooth non-vacuous data on $\T$,  \cite{DKRT2018,ST1,ST2,ST3}. In multi-D, small initial data results were proved in \cite{DMPW2019,Shv2018,HeT2017}.

Topological models presented a new set of challenges from the perspective of regularity theory as they do not fit directly under any studied class of fractional drift diffusion equations for which H\"older regularization has been established, see \cite{SS2016,S2012}. The one dimensional case has been treated in the same article \cite{ST-topo} where global wellposedness in class $u\in H^{m+1}$, $\rho\in H^{m+\a/2}$ was proved for $\t\leq \a$. In dimension 1 the topological model shares  a similar conservation law with the metric one, given by 
\[
e = u_x + \cL_\phi \rho,
\]
where $\cL_\phi$ is the singular alignment operator associated with the topological kernel $\phi$:
\begin{equation}
	\cL_{\phi}f =\int_{\mathbb{T}^n} \phi(x,y)(f(y)-f(x)) \dy.
\end{equation}

The primary goal of this paper is to initiate the study of topological models in arbitrary dimension by establishing local well-posedness of classical solutions in high regularity Sobolev classes.
\begin{theorem}\label{t:main}
	For any initial data $u_0\in H^{m+1}(\T^n)$, $\rho_0 \in H^{m+\a}(\T^n)$,  $m\geq m(\a,n)$, with no vacuum $\rho_0(x) >0$ there exists a unique solution to the system \eqref{e:main}-\eqref{eq:kernel}-$(D2)$ on  a time interval $[0,T_0)$ with $T_0$ dependent on the initial condition,  in the class
	\begin{equation}\label{e:class}
		\begin{split}
		u & \in C_w([0,T_0), H^{m+1}) \cap L^2([0,T_0), H^{m+1+\frac{\a}{2}}) \\
		\rho& \in C_w([0,T_0), H^{m+\a})
		\end{split}
	\end{equation}
If $n=1$, then the solution is global and \eqref{e:class} holds on any finite time interval. 
\end{theorem}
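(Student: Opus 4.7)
The plan is to combine a viscous regularization of \eqref{e:main} with uniform a priori estimates driven by the sharp coercivity of $\cL_\phi$ announced in the abstract, and then pass to the limit. Concretely, I would consider the parabolic approximation
\begin{equation*}
\p_t u^\e + u^\e\cdot\n u^\e - \e \D u^\e = \int_{\T^n}\phi^\e(x,y)(u^\e(y) - u^\e(x))\rho^\e(y)\dy,
\end{equation*}
coupled with the continuity equation for $\rho^\e$, where $\phi^\e$ is obtained from \eqref{eq:kernel} by regularizing the singularity below a cutoff $\e$ and mollifying $d$ away from zero. For each $\e>0$ classical parabolic theory yields a local solution in $H^{m+1}\times H^{m+\a}$. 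The task is to produce estimates uniform in $\e$ on a time interval $[0,T_0)$ depending only on the size of the data and the lower bound $\rho_0\geq c_0>0$.

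For the velocity estimate I would apply $\L^{m+1}$ to the momentum equation and test against $\rho^\e\L^{m+1}u^\e$. The natural coercivity
\begin{equation*}
-\int \rho u \cdot \cL_\phi^\rho u \dx = \tfrac12 \int\!\!\!\int \phi(x,y)|u(x)-u(y)|^2 \rho(x)\rho(y)\dy\dx \gtrsim c_0^2 \|u\|_{\dH^{\a/2}}^2, \qquad \cL_\phi^\rho u := \int_{\T^n} \phi(x,y)(u(y)-u(x))\rho(y)\dy,
\end{equation*}
lifts, through the high-order coercivity of $\cL_\phi$, to the smoothing term $\|u^\e\|_{\dH^{m+1+\a/2}}^2$ appearing in \eqref{e:class}. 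Transport commutators $[\L^{m+1},u^\e\cdot\n]u^\e$ are absorbed via Kato--Ponce and Young. For the density, differentiating $\p_t\rho^\e + \n\cdot(\rho^\e u^\e)=0$ with $\L^{m+\a}$ and pairing with $\L^{m+\a}\rho^\e$ yields, after refined commutator bounds of the type $\|[\L^{m+\a},u^\e]\n\rho^\e\|_{L^2}\lesssim \|\n u^\e\|_\infty\|\rho^\e\|_{H^{m+\a}} + \|u^\e\|_{H^{m+\a}}\|\n\rho^\e\|_\infty$, a closed Gronwall-type inequality provided $m$ is large enough and $\|u^\e\|_{H^{m+\a}}$ is controlled by the dissipation (which is automatic since $\a<2$). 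The pointwise lower bound $\rho^\e\geq c_0/2$ is propagated on $[0,T_0)$ because $\n\cdot u^\e$ remains $L^\infty$-bounded by Sobolev embedding. The upshot is an inequality $\ddt Y^\e + c\|u^\e\|_{\dH^{m+1+\a/2}}^2 \lesssim P(Y^\e)$ for $Y^\e = \|u^\e\|_{H^{m+1}}^2+\|\rho^\e\|_{H^{m+\a}}^2$, yielding a uniform existence time.

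The principal technical obstacle is establishing the high-order coercivity and simultaneously controlling the alignment commutator $[\L^{m+1},\cL_\phi^{\rho^\e}]u^\e$: unlike the metric case, $\phi(x,y)$ is non-convolution and depends nonlocally on $\rho$ through $d$, so convolution Fourier symbol calculus is unavailable. I would split $\phi = \phi_{\text{princ}} + \phi_{\text{sub}}$ with principal part $\sim |x-y|^{-(n+\a)}a(x,y;\rho)$ for a smooth modulator $a$, and estimate the commutator via a Coifman--Meyer-type paraproduct expansion, using the fractional gain $\|u^\e\|_{\dH^{m+1+\a/2}}$ from the coercivity to absorb the top-order output. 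The non-vacuum hypothesis yields $d(x,y)\gtrsim c_0^{1/n}|x-y|$ uniformly, keeping symbol bounds finite and the coercivity constant $\sim c_0^2$. Once $\e$-uniform estimates are secured, Aubin--Lions compactness (with $\p_t u^\e,\p_t\rho^\e$ in weaker norms) extracts a limit in the class \eqref{e:class}; uniqueness follows from a low-regularity difference estimate again exploiting the coercivity of $\cL_\phi^\rho$. For $n=1$, the conservation law $e = u_x + \cL_\phi\rho$ with $e_t + (ue)_x = 0$, combined with the one-dimensional techniques of \cite{ST-topo}, delivers global $L^\infty$ control of $u_x$ and uniform non-vacuum bounds on $\rho$; together with the local theory these form a blow-up criterion ruling out finite-time breakdown, upgrading the solution to all finite times.
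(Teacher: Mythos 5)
There is a genuine gap in your density estimate, and it is precisely the obstruction that forces the paper's entire strategy. When you apply $\L^{m+\a}$ to the continuity equation $\rho_t+u\cdot\n\rho+\rho\,\n\cdot u=0$ and test against $\L^{m+\a}\rho$, the term $\L^{m+\a}(\rho\,\n\cdot u)$ produces, via Kato--Ponce, the factor $\|\rho\|_\infty\|u\|_{\dH^{m+1+\a}}$. This exceeds not only the controlled norm $\|u\|_{H^{m+1}}$ but also the dissipation gain $\|u\|_{\dH^{m+1+\a/2}}$ (since $\a>\a/2$), and the continuity equation carries no smoothing of its own with which to trade derivatives. Your parenthetical ``$\|u^\e\|_{H^{m+\a}}$ is controlled by the dissipation'' miscounts the relevant norm: it is $\n\cdot u$, not $u$, that must be placed in $H^{m+\a}$. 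A pure transport estimate can therefore close only up to $\rho\in H^{m}$ (the level the paper actually propagates directly in Section~\ref{s:prelim}), never up to $H^{m+\a}$. The paper circumvents this by propagating $e=\n\cdot u+\cL_\phi\rho$ in $H^m$ instead of $\rho$ in $H^{m+\a}$ --- the identity \eqref{e:iden} shows the dangerous top-order pieces of $\p_t\cL_\phi\rho$ and $\n\cdot\cL_\phi(\rho u)$ cancel --- and then \emph{recovers} $\|\rho\|_{\dH^{m+\a}}\lesssim Y_m^N$ from the two-sided coercivity of Proposition~\ref{p:maincoer}. Your proposal acknowledges $e$ only for the 1D global statement, but it is indispensable already for the multi-dimensional local theory.

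Two further points. First, regularizing the kernel to $\phi^\e$ destroys the very coercivity you rely on: the dissipation $\|u\|_{\dH^{m+1+\a/2}}^2$ and the lower bound in \eqref{e:Lcommmain3} come from the $|x-y|^{-(n+\a)}$ singularity, so the constants degenerate as the cutoff is removed and the estimates are no longer uniform in $\e$. The paper keeps $\phi$ intact and adds only $\e\D$ to both equations; the price is that the $e$-equation acquires the extra terms $-2\e\cL_{\n\phi}\n\rho-\e\cL_{\D\phi}\rho$ in \eqref{e:evisc}, which must be absorbed by the viscous dissipation --- a step your scheme avoids only because it never closes. Second, your ``smooth modulator'' $a(x,y;\rho)$ for the paraproduct argument is only as regular as $\rho$ itself, so a Coifman--Meyer symbol calculus does not apply off the shelf; the paper's replacement is the Faa di Bruno expansion with maximal-function and Gagliardo--Sobolevskii bounds, including the surface-integral reduction of $\int_{\O}\p^{m+1}\rho$ for the derivative-overload terms. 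These would all need to be supplied before the a priori inequality $\ddt Y+c\|u\|_{\dH^{m+1+\a/2}}^2\lesssim P(Y)$ can be asserted.
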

Symbol $C_w$ here means weakly continuous functions. Let us make several remarks.  First,  the relationship between regularity classes of $u$ and $\rho$ are related naturally by the way they enter into the $e$-quantity already in 1D. Second, the global existence in dimension 1 is an improvement over \cite{ST-topo} in the density class which is achieved by establishing sharp coercivity estimates on the alignment operator (see \prop{p:maincoer}):
\begin{equation}\label{e:introcoerc}
	\|	\cL_{\phi}f \|_{\dH^m} \sim \| f\|_{\dH^{m+\a}} + \mathrm{lower\ order\ terms}.
\end{equation}
This is one of the major technical components in the proof of \thm{t:main}.  Third, we cannot assert that the density gains any additional $L^2$ integrability in higher class similar to the velocity.  The reason is that the continuity equation has no intrinsic parabolic structure as it does in 1D. Indeed, considering $e = \n \cdot u + \cL_\phi \rho$  in multi-D, it does not satisfy the clean continuity law, see \eqref{e:iden}, and consequently cannot be considered as a lower order quantity as in 1D. Writing the continuity equation as
\begin{equation}\label{e:cont2}
	\rho_t + u \cdot \n \rho + e\rho = \rho \cL_\phi \rho,
\end{equation}
injects a rough forcing term $e\rho$ that drives the density out of the expected smoother class $H^{m+\a+\frac{\a}{2}}$.

The local existence proof is based on establishing short term control on the grand quantity
\begin{equation}\label{e:Ym}
Y_m = \| u\|_{\dH^{m+1}}^2 + \|e\|_{\dH^m}^2 +  \|\rho\|_{\dH^m}^2 + \rmax + \rmin^{-1},
\end{equation}
where $\rmin = \min \rho$, $\rmax = \max \rho$. 
The overall goal is to establish an a priori Riccati type equation
\begin{equation}
\ddt Y_m \leq C Y_m^N,
\end{equation}
where $N\in \N$ may be large. Coercivity estimates \eqref{e:introcoerc} demonstrate that $Y_m$ is equivalent to controlling $u$ in $H^{m+1}$ and $\rho$ in $H^{m+\a}$. However, due to the deficiencies associated with the density equation \eqref{e:cont2}, we resort to replacing the pair $(u,\rho)$ with $(u,e)$ for the purposes of a priori estimates. The same strategy already appeared in all previous works on singular models \cite{DKRT2018,ST1,ST2,ST3,ST-topo}.

The structure of the paper is straightforward. In \sect{s:prelim} we set the notation and make elementary a priori estimates on lower order terms in $Y_m$. \sect{s:coerc} is entirely devoted to coercivity bounds on the alignment operator via commutator estimates. Sections~\ref{s:u} and \ref{s:e} detail a priori estimates on the $u$ and $e$ equations, respectively. In \sect{s:visc} we conclude by finding local solutions via viscous regularization scheme and establish stability of our a priori estimates under such approximation.

\section{Preliminaries}\label{s:prelim}

In this section we go through a few quick computations that establish a priori estimates on the lower order terms in the grand quantity $Y_m$ \eqref{e:Ym}, namely, $ \|\rho\|_{\dH^m}^2 + \rmax + \rmin^{-1}$.

The bound on  $\|\rho\|_{\dH^m}^2$ follows by a simple classical commutator estimate. Indeed, we have
\[
\rho_t + u \cdot \n \rho + (\n\cdot u) \rho =0.
\]
So, testing with $\p^{2m}\rho$ we obtain
\[ 
\ddt \|\rho\|_{\dH^m}^2 = \int (\n\cdot u) |\p^m \rho|^2 \dx - \int( \p^m(u \cdot \n \rho) - u \cdot \n \p^m \rho) \p^m \rho \dx - \int  \p^m((\n\cdot u) \rho) \p^m \rho \dx.
\]
Recalling the classical commutator estimate
\begin{equation}\label{e:classcomm}
\| \p^{m}(fg) - f \p^{m} g \|_2 \leq |\n f|_\infty \|g\|_{\dH^{m-1}} + \| f\|_{\dH^{m}} |g|_\infty,
\end{equation}
we obtain
\[
\ddt \|\rho\|_{\dH^m}^2 \leq  |\n u|_\infty  \|\rho\|_{\dH^m}^2 +  \|u\|_{\dH^m}  \|\rho\|_{\dH^m} |\n \rho|_\infty +  \|u\|_{\dH^{m+1}}  \|\rho\|_{\dH^m} |\rho|_\infty \leq C Y_m^3.
\]
Next, differentiating the maximum we obtain
\[
\ddt \rmax \leq |\n u|_\infty \rmax,
\]
and similarly,
\[
\ddt \rmin^{-1} \leq |\n u|_\infty \rmin^{-1}.
\]
Thus,
\[
\ddt (\|\rho\|_{\dH^m}^2 + \rmax +  \rmin^{-1}) \lesssim Y_m^3.
\]
Having these simple bounds out of the way, the main focus now will be on obtaining similar bounds on the first two components of $Y_m$ and ensuring that $Y_m$ is comparable with the spaces in which we are proving local well-posedness.

\section{Coercivity bounds on $\cL_\phi$}\label{s:coerc}

Letting $y=x+z$ and defining the increment operator $\delta_zf(x)=f(x+z)-f(x)$ we can rewrite the operator as
\begin{align}
\mathcal{L}_{\phi}f&=\int_{\mathbb{T}^n} \phi(x,x+z)\delta_zf(x) \dz
\end{align}

\begin{proposition} \label{p:maincoer}For any sufficiently large $m\in\N$ and  $0<\a<2$  there exists a polynomial $p_N$ of degree $N = N(m,n,\a) \in \N$  such that the following inequalities hold
	\begin{equation}\label{e:Lcommmain3}
	\begin{split}
	\|\mathcal{L}_{\phi}  f \|_{\dot{H}^m}^2 & \lesssim  \rmin^{-2\t/n}  (\|f\|^2_{\dot{H}^{m+\alpha}}+\|\rho\|^2_{\dot{H}^{m+\alpha}}) + p_N(\rmax,\rmin^{-1}, \| \rho\|_{\dot{H}^{m-1+\a}}, \| f\|_{\dot{H}^{2 + \frac{n}{2}}}), \\
	\|\mathcal{L}_{\phi}  f \|_{\dot{H}^m}^2  & \gtrsim   \rmax^{-2\t/n}(\|f\|^2_{\dot{H}^{m+\alpha}}+\|\rho\|^2_{\dot{H}^{m+\alpha}})  - p_N(\rmax,\rmin^{-1}, \| \rho\|_{\dot{H}^{m-1+\a}}, \| f\|_{\dot{H}^{2 + \frac{n}{2}}}).
	\end{split}
	\end{equation}
\end{proposition}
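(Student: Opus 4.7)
The plan is to reduce $\cL_\phi$ to a translation-invariant singular kernel modulated by a smooth, pointwise-bounded density weight, expand $\p^m \cL_\phi f$ by Leibniz, and estimate the two resulting pieces separately. The first move is to parametrize the symmetric communication domain by the reference $\O_0$ through the affine map $\eta\mapsto (x+y)/2+(|y-x|/2)R(\widehat{y-x})\eta$, where $R(\hat z)$ sends $\be_1$ to $\hat z$. A change of variables in the definition of $d$ yields
\[
d(x,x+z)^n=(|z|/2)^n\,\tilde\rho(x,z),\qquad \tilde\rho(x,z):=\int_{\O_0}\rho\bigl(x+z/2+(|z|/2)R(\hat z)\eta\bigr)\,d\eta,
\]
so that $\phi(x,x+z)=2^{\t}h(-z)|z|^{-(n+\a)}\tilde\rho(x,z)^{-\t/n}$. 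The singular factor is now translation invariant and all $x$-dependence sits in the smooth weight $\tilde\rho^{-\t/n}$, which is pointwise between $|\O_0|^{-\t/n}\rmax^{-\t/n}$ and $|\O_0|^{-\t/n}\rmin^{-\t/n}$. Leibniz under the integral then gives
\[
\p^m\cL_\phi f=\cL_\phi\p^m f+\cC_m f,\qquad \cC_m f:=\sum_{k=1}^m\binom{m}{k}\!\int(\p_x^k\phi)(x,x+z)\,\delta_z\p^{m-k}f(x)\,dz.
\]

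For the principal piece $\cL_\phi\p^m f$ I freeze $\tilde\rho$ at $z=0$, writing $\tilde\rho(x,z)^{-\t/n}=(|\O_0|\rho(x))^{-\t/n}+r(x,z)$ with $r(x,z)=O(|z|)$ by Taylor expansion of the averaged density (constants controlled by $\rmin^{-\t/n-1}|\n\rho|_\infty$). The frozen part produces $C\rho(x)^{-\t/n}\L^\a_h\p^m f$, where $\L^\a_h$ is the cutoff fractional Laplacian, and $r$ yields a remainder of order $\a-1$ absorbed into $p_N$. Using $\rho^{-\t/n}(x)\leq\rmin^{-\t/n}$ from above and $\rho^{-\t/n}(x)\geq\rmax^{-\t/n}$ from below, combined with the standard equivalence $\|\L^\a_h g\|_2\sim\|g\|_{\dH^\a}$ modulo a compact smoothing operator coming from the cutoff $h$, yields matching two-sided bounds carrying the $\|f\|^2_{\dH^{m+\a}}$ term with the correct $\rmin/\rmax$ weights. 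The commutator sum is handled by Faa di Bruno expansion of $\p_x^k\tilde\rho^{-\t/n}$: each summand is a bilinear singular integral
\[
\int\frac{h(-z)}{|z|^{n+\a}}\,\Phi(\tilde\rho)(x,z)\prod_i(\p^{\beta_i}\tilde\rho)(x,z)\,\delta_z\p^{m-k}f(x)\,dz,\qquad \sum|\beta_i|=k,
\]
with $\Phi$ smooth and bounded uniformly by $\rmin,\rmax$. Averaging commutes with differentiation and is bounded on $L^p$, so each $\p^{\beta_i}\tilde\rho$ inherits the corresponding Sobolev norm of $\rho$. A fractional Leibniz/Kato--Ponce estimate distributes the $\a$-smoothing across the two rough factors, giving schematic bounds $\|\rho\|_{\dH^{k+\a}}\|f\|_{\dH^{m-k}\cap L^\infty}$ or $\|\rho\|_{\dH^{k}\cap L^\infty}\|f\|_{\dH^{m-k+\a}}$. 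The endpoint $k=m$ (all derivatives on $\rho$) feeds the $\|\rho\|^2_{\dH^{m+\a}}$ term in \eqref{e:Lcommmain3}, the endpoint $k=0$ (absorbed into the principal term) feeds $\|f\|^2_{\dH^{m+\a}}$, and intermediate $1\leq k\leq m-1$ are controlled by Gagliardo--Nirenberg interpolation between $\|\rho\|_{\dH^{m-1+\a}}$, $\rmax$, $\rmin^{-1}$, and $\|f\|_{\dH^{2+n/2}}$ via Sobolev embedding $\dH^{2+n/2}\hookrightarrow W^{1,\infty}$, all absorbed into $p_N$.

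Assembly: the upper bound is the direct sum of contributions; for the lower bound I use $(a+b)^2\geq\tfrac12 a^2-b^2$ on $\p^m\cL_\phi f=\cL_\phi\p^m f+\cC_m f$, estimate $\|\cC_m f\|_2$ from above as in the previous paragraph, and rearrange. The primary obstacle is the top-order commutator $k=m$: one must place the full $\dH^{m+\a}$ regularity on $\rho$, which forces the dual factor $f$ into $L^\infty$, and simultaneously control the losing factor $\tilde\rho^{-\t/n-k}$ uniformly by $\rmin^{-\t/n-k}$. The subtlety is that $\p^m\tilde\rho$ is not $\p^m\rho$ but its average over a ball of radius $|z|$ with a rotating frame; the commutator between this average and the singular kernel $|z|^{-(n+\a)}$ must be controlled without generating uncontrolled $z$-factors, which is where the translation-equivariance of the averaging plays a decisive role (the $z$-derivatives of the shift are bounded by $R(\hat z)$'s smoothness away from $z=0$, and the $z=0$ singularity is integrable against $|z|^{-(n+\a)}$ for $\a<2$). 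A secondary but routine technical point is showing that the lower-bound constant for $\|\L^\a_h\cdot\|_2$ is independent of $h$ up to a compact error absorbed into lower-order seminorms by a standard Bessel-potential decomposition.
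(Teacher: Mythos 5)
Your architecture matches the paper's: freeze the topological weight to extract a truncated fractional Laplacian $\rho(x)^{-\t/n}\L_\a \p^m f$ with a lower-order remainder, expand the commutator $[\cL_\phi,\p^m]$ by Leibniz and Faa di Bruno, and absorb the top-order pieces by interpolation and Young. However, there are two concrete gaps in your treatment of the commutator terms. First, the summands are not of a form to which Kato--Ponce applies: they are non-convolution bilinear forms $\int |z|^{-(n+\a)}\,a(x,z)\,\d_z\p^{m-k}f(x)\,\dz$ in which $a(x,z)$ is a product of averages $\int_{\O(x,x+z)}\p^{k_i}\rho$ over $z$-dependent domains, not a product of functions of $x$ under a Fourier multiplier. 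The mechanism that actually closes these estimates is the pointwise domination $|\int_{\O(x,x+z)}\p^k\rho|\leq |z|^n M[\p^k\rho](x)$ by the Hardy--Littlewood maximal function, followed by H\"older with exponents tuned to $p_k=2m/k$, $q=2m/l$ and the Gagliardo--Sobolevskii characterization of $W^{s,p}$; your proposal never extracts the $|z|^n$ factor that cancels the $d^{-|\bj|n}$ denominators, so the claimed ``schematic bounds'' are not justified as written.

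Second, and more seriously, for $1\leq\a<2$ your integrability claim fails: a single increment gives only $|\d_z g(x)|\lesssim |z|\,|\n g|_\infty$, and $|z|^{-(n+\a)+1}$ is not integrable near $z=0$ when $\a\geq 1$. One must subtract the next Taylor term, $\d_z g - z\cdot\n g = O(|z|^2)$, and then the leftover linear term $\n g(x)\cdot\int |z|^{-(n+\a)}a(x,z)\,z\,\dz$ survives only because of cancellation under $z\mapsto -z$ (assumption (D2), $\O_0=-\O_0$): one must estimate differences such as $|d^{\t+|\bj|n}(x,x+z)-d^{\t+|\bj|n}(x,x-z)|\lesssim |\n\rho|_\infty|z|^{\t+|\bj|n+1}$ and the corresponding differences of products of domain averages, which cost an extra fractional Sobolev seminorm of $\rho$ (this is precisely where the $\|\rho\|_{\dH^{m-\frac12+\a}}$-type terms in the paper's commutator bound originate). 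Your proposal contains no symmetrization step, so the case $\a\geq 1$ is not covered. A minor additional imprecision: the remainder from freezing the principal term lands in $\dH^{m+\a-1}$ (or $\dH^{m+\a/2}$ after smoothing), which cannot be placed into $p_N$ directly since $p_N$ sees $f$ only through $\|f\|_{\dH^{2+\frac{n}{2}}}$; it must first be interpolated against $\|f\|_{\dH^{m+\a}}$ and the top piece absorbed into the leading term, as in \eqref{e:between}.
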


As a consequence of this proposition we obtain control on the key norm $\|\rho\|_{\dH^{m+\a}}$, that will appear in the main estimates on $Y_m$:
\begin{equation}\label{e:rhoY}
\|\rho\|_{\dH^{m+\a}}^2 \lesssim Y_m^N,
\end{equation}
for some large $N \in \N$.  Indeed, setting $f=\rho$ in the above, we find ($N$ may change from line to line)
\begin{multline*}
\|\rho\|_{\dH^{m+\a}}^2 \lesssim \rmax^{2\t/n}	\|\cL_{\phi}  \rho \|_{\dot{H}^m}^2 +   p_N(\rmax,\rmin^{-1}, \| \rho\|_{\dot{H}^{m-1+\a}}) \\
\leq   \rmax^{2\t/n} \| u\|_{\dH^{m+1}}^2 +\rmax^{2\t/n} \|e\|_{\dH^m}^2+ p_N(\rmax,\rmin^{-1}, \| \rho\|_{\dot{H}^{m-1+\a}}) \leq Y_m^4 + p_N(\rmax,\rmin^{-1}, \| \rho\|_{\dot{H}^{m-1+\a}}).
\end{multline*}
Now by the same estimate applied to $ \|\rho\|_{\dot{H}^{m-1+\alpha}}$ we have
\[
 \|\rho\|_{\dot{H}^{m-1+\alpha}}^2 \leq Y_{m-1}^4 + p_N(\rmax,\rmin^{-1}, \| \rho\|_{\dot{H}^{m-2+\a}}).
\]
However, trivially $Y_{m-1} \leq Y_m$ and $ \|\rho\|_{\dot{H}^{m-2+\alpha}} \leq  \|  \rho \|_{\dot{H}^{m}}$ for all $0<\a<2$ with the latter being included into the definition of $Y_m$.  Hence,
\[
\|\rho\|_{\dH^{m+\a}}^2 \lesssim Y_m^4 + p_N(\rmax,\rmin^{-1}, Y_m) \leq Y_m^N,
\]
and \eqref{e:rhoY} follows. 

Conversely, it is clear that $\|\rho\|_{\dH^{m+\a}}$ controls 	$\|\cL_{\phi}  \rho \|_{\dot{H}^m}$ by first in \eqref{e:Lcommmain3}. So, along with $\| u\|_{\dH^{m+1}}^2$ it controls $e$. We obtain
\[
Y_m \sim  \| u\|_{\dH^{m+1}}^2 +  \|\rho\|_{\dH^{m+\a}}^2 + \rmax + \rmin^{-1}.
\]

\begin{remark} Although, as we have just seen, estimate \eqref{e:Lcommmain3} is sufficient to establish control over $\|\rho\|_{\dH^{m+\a}}$, what one can actually prove following our argument below is a somewhat sharper version of \eqref{e:Lcommmain3} where the dependence on the density $\rho$ is of  order below $m+\a$. Namely, for every $\e>0$ there exists a $c_\e >0$ such that
	\begin{equation}\label{e:Lcommmain4}
\begin{split}
\|\mathcal{L}_{\phi}  f \|_{\dot{H}^m}^2 & \lesssim \|f\|^2_{\dot{H}^{m+\alpha}} + \| \rho\|_{\dot{H}^{m-1+\a}}^N  \|f\|_{\dot{H}^{m-1+\alpha}}^2+ \|  \rho \|_{\dot{H}^{m}}^2 \| f\|_{\dot{H}^{2 + \frac{n}{2}}}^2 +c_\e  \|\rho\|_{\dot{H}^{m-1+\a+\e}}^2\|f\|_{\dot{H}^{1+\frac{n}{2}}}^2 , \\
\|\mathcal{L}_{\phi}  f \|_{\dot{H}^m}^2  & \gtrsim  \|f\|^2_{\dot{H}^{m+\alpha}} - \| \rho\|_{\dot{H}^{m-1+\a}}^N \|f\|_{\dot{H}^{m-1+\alpha}}^2 -  \|  \rho \|_{\dot{H}^{m}}^2 \| f\|_{\dot{H}^{2 + \frac{n}{2}}}^2 - c_\e  \|\rho\|_{\dot{H}^{m-1+\a+\e}}^2\|f\|_{\dot{H}^{1+\frac{n}{2}}}^2.
\end{split}
\end{equation}
	Here inequality signs $\lesssim, \gtrsim$ mean up to multiples of $\rmin$ and $\rmax$. 
\end{remark}

As a first step in proving \prop{p:maincoer} we show a basic coercivity estimate. 
\begin{lemma}[Basic coercivity]\label{l:m=0} For any $0<\a<2$  the following bounds hold
\begin{equation}
\begin{split}
\|\mathcal{L}_{\phi}f\|^2_2 & \lesssim \rmin^{-2\t/n} \|f\|^2_{\dot{H}^{\alpha}} +  \rmax^{2\t/n} \rmin^{-2-4\t/n} |\n \rho|^2_\infty  \| f\|^2_{\dH^{\a/2}} \\
\|\mathcal{L}_{\phi}f\|^2_2 & \gtrsim \rmax^{-2\t/n} \|f\|^2_{\dot{H}^{\alpha}} -  \rmax^{2\t/n} \rmin^{-2-4\t/n} |\n \rho|^2_\infty  \| f\|^2_{\dH^{\a/2}}.
\end{split}
\end{equation}
\end{lemma}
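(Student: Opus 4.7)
The plan is to freeze the density factor in $\phi$ at $\rho(x)$, so that $\cL_\phi$ splits as $\rho(x)^{-\t/n}$ times a truncated fractional-Laplacian-type convolution $T_0$ plus a small remainder $R$ controlled by the Lipschitz oscillation of $\rho$ on the communication scale. Concretely, the scaling construction of $\{\O(x,y)\}$ from $\O_0$ yields $|\O(x,x+z)|=C_0|z|^n$ for a geometric constant $C_0$, so setting
\[
\rho_{\mathrm{av}}(x,z) \;=\; \frac{1}{C_0|z|^n}\int_{\O(x,x+z)}\rho(\xi)\,\dxi
\]
one has $\phi(x,x+z) = C_0^{-\t/n}h(z)|z|^{-n-\a}\rho_{\mathrm{av}}^{-\t/n}$. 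Decomposing $\rho_{\mathrm{av}}^{-\t/n} = \rho(x)^{-\t/n}+\eta(x,z)$ then gives
\[
\cL_\phi f(x) \;=\; C_0^{-\t/n}\rho(x)^{-\t/n}\,T_0 f(x) \;+\; Rf(x), \qquad T_0 f(x) \;=\; \int \frac{h(z)}{|z|^{n+\a}}\,\d_z f(x)\,\dz,
\]
with $Rf(x) = C_0^{-\t/n}\int h(z)\eta(x,z)|z|^{-n-\a}\d_z f(x)\,\dz$.

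The first task is Plancherel coercivity for $T_0$. Its Fourier symbol $m(\xi) = \int h(z)|z|^{-n-\a}(e^{iz\cdot\xi}-1)\,\dz$ is real and nonpositive by radiality of $h$, vanishes only at $\xi=0$, and obeys $|m(\xi)|\sim |\xi|^\a$ as $|\xi|\to\infty$ since the cutoff $h$ only perturbs the pure power $|z|^{-n-\a}$ by a smoother piece. Because on the torus $\xi$ ranges over $\Z^n$, compactness on bounded annuli combined with the strict inequality $m(\xi)<0$ for $\xi\ne 0$ yields $|m(\xi)|\gtrsim |\xi|^\a$ uniformly on $\Z^n\setminus\{0\}$, so Plancherel gives $\|T_0 f\|_2 \sim \|f\|_{\dH^\a}$. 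This is applied after reducing to mean-zero $f$, which is harmless because $\cL_\phi$ annihilates constants and both sides of the lemma are homogeneous seminorms. Combining with the pointwise bounds $\rmax^{-\t/n}\leq \rho(x)^{-\t/n}\leq \rmin^{-\t/n}$ produces the principal $\dH^\a$ term in each of the two inequalities.

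For the remainder, (D3) and the scaling force $\O(x,x+z)\ss B_{|z|}(x)$, so the mean value theorem yields $|\rho_{\mathrm{av}}(x,z)-\rho(x)|\leq |\n\rho|_\infty|z|$. Combined with the elementary bound $|a^{-\t/n}-b^{-\t/n}|\leq (\t/n)\min(a,b)^{-\t/n-1}|a-b|$ and inflating by $(\rmax/\rmin)^{\t/n}\geq 1$, this produces
\[
|\eta(x,z)|\;\leq\; C\,\rmax^{\t/n}\rmin^{-1-2\t/n}\,|\n\rho|_\infty\,|z|.
\]
Minkowski's inequality in $x$, the standard Sobolev increment bound $\|\d_z f\|_2 \leq |z|^{\a/2}\|f\|_{\dH^{\a/2}}$, and the radial integral $\int_0^{r_0}r^{-\a/2}\,\dr<\infty$ (finite \emph{precisely because} $\a<2$) then yield $\|Rf\|_2 \lesssim \rmax^{\t/n}\rmin^{-1-2\t/n}|\n\rho|_\infty\|f\|_{\dH^{\a/2}}$.

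Finally I would assemble: the upper bound in the lemma comes from squaring $\|\cL_\phi f\|_2\leq \|\rho^{-\t/n}T_0 f\|_2 + \|Rf\|_2$ via $(a+b)^2\leq 2a^2+2b^2$, and the lower bound from reverse triangle $\|\cL_\phi f\|_2\geq \|\rho^{-\t/n}T_0 f\|_2-\|Rf\|_2$ followed by $(a-b)^2\geq \tfrac12 a^2-b^2$, which absorbs the cross term into the remainder. I expect the main obstacle to be the Plancherel coercivity step: retaining the full $|\xi|^\a$ lower scaling of the cutoff symbol $m(\xi)$ uniformly on the discrete nonzero torus modes, for which the compactness of $\T^n$ and the strict positivity of $1-\cos(z\cdot\xi)$ on $\supp h$ for $\xi\in\Z^n\setminus\{0\}$ are essential. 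The parameterization and the Lipschitz-scale estimate for $\eta$ are essentially bookkeeping once the scaling of $|\O(x,x+z)|$ is identified.
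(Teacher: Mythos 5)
Your proof is correct and follows essentially the same route as the paper: freeze the topological coefficient at $\rho(x)$, identify the main term as a truncated fractional Laplacian comparable to $\|f\|_{\dH^\a}$, and control the residual by the Lipschitz oscillation $|\n\rho|_\infty|z|$ of the averaged density, landing in $\|f\|_{\dH^{\a/2}}$. The only (harmless) differences are that you spell out the Plancherel/symbol argument for the two-sided bound on the truncated fractional Laplacian, which the paper simply asserts, and you treat the remainder with the single unified increment bound $\|\d_z f\|_2\lesssim |z|^{\a/2}\|f\|_{\dH^{\a/2}}$ rather than the paper's case split between $\a<1$ (Minkowski) and $\a\geq 1$ (Cauchy--Schwarz).
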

\begin{proof}
Let us denote
\[
\fint_{\O(0,z)} \rho(x+\xi) \dxi = \frac{1}{|\O(0,z)|} \int_{\O(0,z)} \rho(x+\xi) \dxi.
\]
Note that $|\O(0,z)| \sim |z|^n$.  In order to remove the $x$-dependence from the kernel we ``freeze"  the coefficient, meaning replace $d$ with the average value and then replace it with $\rho(x)$:
\[
\cL_\phi f(x) = \rho(x)^{-\tau/n} \int_{\mathbb{T}^n} \frac{h(|z|)}{|z|^{n+\alpha}} \d_z f(x)dz+ \int_{\mathbb{T}^n} \frac{h(|z|)}{|z|^{n+\alpha}}\left( \frac{1}{\left[\fint_{\O(0,z)} \rho(x+\xi) \dxi\right]^{\t/n}} - \frac{1}{\rho^{\t/n}(x)} \right) \d_z f(x)dz.
\]
The first integral represents the truncated fractional Laplacian $\L_\a$, and hence is bounded above and below by $\rmin^{-\t/n} \| f\|_{\dH^\a}$ and $\rmax^{-\t/n} \| f\|_{\dH^\a}$, respectively.  In the residual term we estimate 
\[
 \frac{1}{\left[\fint_{\O(0,z)} \rho(x+\xi) \dxi\right]^{\t/n}} - \frac{1}{\rho^{\t/n}(x)} =  \frac{\rho^{\t/n}(x) - \left[\fint_{\O(0,z)} \rho(x+\xi) \dxi\right]^{\t/n}}{\left[\fint_{\O(0,z)} \rho(x+\xi) \dxi\right]^{\t/n} \rho^{\t/n}(x)}
\]
and by Taylor expansion,
\[
\left|\rho^{\t/n}(x) - \left[\fint_{\O(0,z)} \rho(x+\xi) \dxi\right]^{\t/n}\right| \leq \rmax^{\t/n} \rmin^{-1} \left|\rho(x) - \fint_{\O(0,z)} \rho(x+\xi) \dxi \right|  \leq \rmax^{\t/n} \rmin^{-1} |\n \rho|_\infty |z|.
\]
So, the residual term is bounded by
\[
\rmax^{\t/n} \rmin^{-1-2\t/n} |\n \rho|_\infty \int_{\mathbb{T}^n} \frac{h(|z|)}{|z|^{n+\alpha-1}} |\d_z f(x)|dz.
\]
Estimating the $L^2$-norm of the remaining integral for $\a<1$ we get a bound by $\|f\|_2$ by the Minkowskii inequality, and for $\a\geq 1$, 
\[
\left| \int_{\mathbb{T}^n} \frac{h(|z|)}{|z|^{n+\alpha-1}} |\d_z f(x)|dz \right|^2 = \left| \int_{\mathbb{T}^n} \frac{h(|z|)}{|z|^{\frac{n}{2}-\e}}\frac{|\d_z f(x)|}{|z|^{\frac{n}{2} + \a - 1 + \e}} dz \right|^2  \leq C_\e \int_{\mathbb{T}^n} \frac{|\d_z f(x)|^2}{|z|^{n+2(\alpha-1+\e)}} dz.
\]
Integrating in $x$ we obtain $\leq \| f\|_{\dH^{\a-1+\e}}^2$. In either case, we can increase regularity to $ \| f\|_{\dH^{\a/2}}$.

\end{proof}

We now want to lift the base regularity into higher order Sobolev spaces $H^m$. The natural way to obtain such estimates is through a commutator 
\begin{equation}\label{e:Lmdec}
 \partial_i^m \mathcal{L}_{\phi}f =  \mathcal{L}_{\phi} \p_i^m f + [\cL_\phi , \p_i^m] f.
\end{equation}
The commutator can be expanded by the Leibniz rule,
\begin{align*}
[\cL_\phi , \p_i^m] f = \sum_{l=0}^{m-1} {m \choose l}  \mathcal{L}_{\partial_i^{(m-l)}\phi}\partial_i^l f
\end{align*}
The main term in \eqref{e:Lmdec}, upon summation over $i$ enjoys the estimates from \lem{l:m=0}:
\begin{equation}\label{e:Lcommmain}
\begin{split}
\sum_{i=1}^n \|\mathcal{L}_{\phi} \p_i^m f \|_2^2 & \lesssim \rmin^{-2\t/n}  \|f\|^2_{\dot{H}^{m+\alpha}} + \rmax^{2\t/n} \rmin^{-2-4\t/n} |\n \rho|^2_\infty   \|f\|_{\dot{H}^{m+\frac{\a}{2}}}^2, \\
\sum_{i=1}^n \|\mathcal{L}_{\phi} \p_i^m f \|_2^2 & \gtrsim  \rmax^{-2\t/n}  \|f\|^2_{\dot{H}^{m+\alpha}} -  \rmax^{2\t/n} \rmin^{-2-4\t/n} |\n \rho|^2_\infty   \|f\|_{\dot{H}^{m+\frac{\a}{2}}}^2.
\end{split}
\end{equation}
By interpolation and the generalized Young inequality, we further obtain
\begin{equation}\label{e:between}
	\begin{split}
	\rmax^{2\t/n} \rmin^{-2-4\t/n} |\n \rho|^2_\infty   \|f\|_{\dot{H}^{m+\frac{\a}{2}}}^2 &\leq 	\rmax^{2\t/n} \rmin^{-2-4\t/n} |\n \rho|^2_\infty    \|f\|_{\dot{H}^{2+\frac{n}{2}}}^{2\th_{m,n,\a}} \|f\|_{\dot{H}^{m+ \alpha}}^{2-2\th_{m,n,\a}} \\
	& \leq c_\e p_N(\rmax,\rmin^{-1}, \| \rho\|_{\dot{H}^{m-1+\a}}, \| f\|_{\dot{H}^{2 + \frac{n}{2}}}) + \e \rmax^{-2\t/n}  \|f\|_{\dot{H}^{m+ \alpha}}^{2}
	\end{split}
\end{equation}
The highest term $\e \rmax^{-2\t/n}  \|f\|_{\dot{H}^{m+ \alpha}}^{2}$ for small $\e$ can be absorbed into the leading terms in \eqref{e:Lcommmain}. Thus, we obtain  required bounds \eqref{e:Lcommmain3} from the highest term.  The rest follows from the following estimate on the commutator.

\begin{lemma}[Main commutator estimate]\label{l:maincomm}
We have the following inequality
\begin{equation}\label{e:maincomm}
	\| [\cL_\phi , \p_i^m] f \|_2^2 \lesssim  \|  \rho \|_{\dot{H}^{m-1+\a}}^N ( \|  f \|_{\dot{H}^{m - \frac12 +\a}}^2+ \|  f \|_{\dot{H}^{m + \frac{\a}{2}}}^2)+ ( \|  \rho \|_{\dot{H}^{m}}^2+   \|\rho\|_{\dot{H}^{m-\frac12+\a}}^2) \|  f \|_{\dot{H}^{2+ \frac{n}{2}}}^2 .
\end{equation}
for some $N =N (m,n,\a) \in \N$. Here, $\lesssim$ means up to a  factor of $\rmax^a \rmin^{-b}$. 
\end{lemma}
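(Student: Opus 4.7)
The plan is to estimate each term of the Leibniz expansion
\[
[\cL_\phi, \p_i^m]f = \sum_{l=0}^{m-1}\binom{m}{l}\cL_{\p_i^{(m-l)}\phi}\p_i^l f
\]
separately in $L^2$ and sum. For each $l$ I decompose the kernel as $\phi(x,y) = h(x-y)|x-y|^{-(n+\alpha-\tau)} \cdot d^{-\tau}(x,y)$. Derivatives falling on the purely smooth factor $h(x-y)|x-y|^{-(n+\alpha-\tau)}$ merely raise the $|x-y|$-singularity to a higher order but produce kernels of the same symbolic type; when paired with $\d_z \p_i^l f(x)$ they yield operators of shifted fractional order that are estimated by adapting \lem{l:m=0}. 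The essential contributions come from derivatives of $d^{-\tau}(x,y)$. Using shape-derivative calculus for the moving domain $\O(x,y)$, each $\p_i^k d^{-\tau}(x,y)$ is a rational expression whose numerator is a polynomial in boundary integrals of $\rho$ and its $x$-derivatives up to order $k-1$ over $\p\O(x,y)$, and whose denominator is a positive power of $d(x,y)$. Since $d(x,y)\gtrsim\rmin^{1/n}|x-y|$ by (D3) and the no-vacuum condition, those denominators contribute only multiplicative factors of the form $\rmax^a\rmin^{-b}$ that are absorbed into the $\lesssim$ of the statement.

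Once the $m-l$ derivatives are unpacked, each term takes the form of a product of $\rho$-dependent coefficients times a truncated fractional kernel of order $\alpha+(m-l)$ acting on $\p_i^l f$. I would then run the freezing-coefficient argument of \lem{l:m=0}: replace the $y$-dependent $\rho$-averages by their $x$-values, so that the leading piece becomes a multiplier times a truncated fractional Laplacian applied to $\p_i^l f$, while the residual picks up an extra factor $|x-y|$ that effectively buys a full derivative of smoothing. The $L^2$-norm is then bounded by Minkowski's inequality, placing the $\rho$-factor in $L^\infty$ via Sobolev embedding $H^{2+n/2}\hookrightarrow L^\infty$ (or one order higher when $\n\rho$ appears), and keeping the $f$-factor in the appropriate fractional space of order near $\alpha+l$. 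Gagliardo-Nirenberg interpolation then redistributes Sobolev regularity between $\rho$ and $f$ to match the three norm pairings on the right-hand side of \eqref{e:maincomm}: the products $\|\rho\|_{\dH^{m-1+\alpha}}^N(\|f\|_{\dH^{m-\frac12+\alpha}}^2+\|f\|_{\dH^{m+\frac{\alpha}{2}}}^2)$ arise from intermediate $l$, with the $\frac{1}{2}$-slack coming from the freezing residual, whereas $(\|\rho\|_{\dH^m}^2+\|\rho\|_{\dH^{m-\frac12+\alpha}}^2)\|f\|_{\dH^{2+\frac{n}{2}}}^2$ comes from the extremal $l$ where the top-order $\rho$-derivatives are forced into $L^2$ and $f$ is placed in $L^\infty$.

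The main obstacle is the endpoint $l=0$, namely $\cL_{\p_i^m\phi}f$, where all $m$ derivatives fall on the kernel and bring down products of boundary traces of $\rho,\p\rho,\ldots,\p^{m-1}\rho$. Each factor in such a product must fit within the $\dH^{m-1+\alpha}$ budget, and a multi-factor Gagliardo-Nirenberg argument that trades $L^\infty$-norms against $H^{2+\frac{n}{2}}$ while keeping one top-order factor in $L^2$ is what produces the high polynomial degree $N=N(m,n,\alpha)$ in the final bound. A secondary technical point is that $\p\O_0$ has two Lipschitz conical tips by (D1); a smooth cutoff away from the tips justifies the shape-derivative identities, and the strict opening $<\pi$ guarantees that the boundary traces of derivatives of $\rho$ remain controlled in spaces compatible with the $\dH^{m-\frac12+\alpha}$ norm on the right-hand side of \eqref{e:maincomm}.
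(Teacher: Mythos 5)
Your overall plan (Leibniz expansion, Faa di Bruno on the topological factor, H\"older/maximal-function bookkeeping, interpolation) points in the right direction, but two of your structural claims are wrong in ways that break the argument. First, in the variables $(x,z)$ with $y=x+z$, the metric factor $h(|z|)|z|^{-(n+\a-\t)}$ does not depend on $x$ at all, so $\p_i^{m-l}$ falls \emph{only} on $d^{-\t}(x,x+z)$; there are no terms in which the $|z|$-singularity is raised. Your assertion that each term is ``a truncated fractional kernel of order $\a+(m-l)$ acting on $\p_i^l f$'' is therefore not only unnecessary but fatal: such an operator costs $f$ exactly $m+\a$ derivatives, the same as the principal term, so no lower-order bound of the form \eqref{e:maincomm} could come out of it (and a kernel of order $>1$ paired with a single first difference $\d_z\p_i^l f$ is not even absolutely convergent). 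The correct mechanism is that $\O(x,x+z)=x+\O(0,z)$, so $\p_x^k\int_{\O(x,x+z)}\rho(\xi)\dxi=\int_{\O(x,x+z)}\p^k\rho(\xi)\dxi$ is a \emph{volume} integral of $\p^k\rho$ --- no shape derivatives, no boundary traces, no issue with the conical tips --- and the bound $\left|\int_{\O(x,x+z)}\p^k\rho(\xi)\dxi\right|\le|z|^n M[\p^k\rho](x)$ exactly cancels the extra $d^{-|\bj|n}\sim|z|^{-|\bj|n}$ in the denominator, leaving a kernel of order $\a$ with the derivatives transferred onto $\rho$ in $L^{p_k}$. Your shape-derivative description also miscounts: it produces derivatives of $\rho$ only up to order $k-1$ on the boundary rather than $\p^k\rho$ in the bulk.

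Second, you never address $1\le\a<2$, which is half the proof. For $\a\ge1$ the first difference $\d_z\p_i^lf$ does not give a convergent Gagliardo--Sobolevskii pairing with an order-$\a$ kernel; one must subtract the next Taylor term $z\cdot\n\p_i^lf(x)$, and the leftover first-moment term must then be symmetrized under $z\mapsto-z$ using (D2). That symmetrization forces estimates on differences such as $d^{\t+|\bj|n}(x,x+z)-d^{\t+|\bj|n}(x,x-z)$ and $\prod_k a_k^{j_k}-\prod_k b_k^{j_k}$, the latter via the fractional densities $D_{s,p_k}\p^k\rho$ with $s=\a-1+\e$; this is precisely where the $\|\rho\|_{\dot{H}^{m-\frac12+\a}}^2\|f\|_{\dot{H}^{1+\frac{n}{2}}}^2$ term of \eqref{e:maincomm} comes from (the $l=0$, $j_m=1$ endpoint). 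Without the Taylor correction and the symmetry of $\O_0$ the estimate fails for $\a\ge1$, and your freezing-of-coefficients step, which buys only a single power of $|z|$, cannot substitute for either mechanism.
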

All the terms on the right hand side of \eqref{e:maincomm}  can be treated by interpolation between $H^{m+\a}$ and a lower order metric. A computation similar to \eqref{e:between}, thus, readily implies \eqref{e:Lcommmain3}.

\begin{proof} In the course of this proof all  inequalities are understood up to a factor of $\rmax^a \rmin^{-b}$, where $a,b>0$ may change from line to line.  We omit those factors for the sake of brevity.
	
Let us denote by $R(\rho,f)$ the right hand side of \eqref{e:maincomm}. 
	
We denote for short $\p_i = \p$. To show the commutator is of lower order in $f$ we need obtain bounds on $\| \mathcal{L}_{\p^{m-l} \phi} \p^lf \|_2^2$, for $l \in \{0,...,m-1\}$ but first we expand $\partial^{m-l} \phi$ using Faa di Bruno's Formula.

 Writing $\phi(x,y)$ as $\phi(x,x+z)$, we see that the derivatives fall only on the topological part of the kernel. Thus we have
 \begin{align}
 \partial^{m-l} \phi(x,x+z)&=|z|^{-(n+\alpha-\tau)}h(|z|)\partial^{m-l} d^{-\tau}(x,x+z)\\
 \partial^{m-l} d^{-\tau}(x,x+z)&=\partial^{m-l}\left[ \int_{\Omega(x,x+z)} \rho(\xi)d\xi \right]^{-\tau/n}=\partial^{m-l}[d^n(x,x+z)]^{-\tau/n}
 \end{align}
Denoting $g=d^n$ and $h(g)=g^{-\tau/n}$, then using Faa di Bruno's Formula gives,
 \begin{align}
 \partial^{m-l} d^{-\tau}(x,x+z)& =\sum \frac{(m-l)!}{j_1!1!^{j_1}j_2!2!^{j_2}...j_{m-l}!(m-l)!^{j_{m-l}}}h^{(j_1+...+j_{m-l})}(g)\prod_{k=1}^{m-l} \left(\partial^k g\right)^{j_k}
 \end{align}
where the sum is over all $(m-l)$-tuples of integers $\bj=(j_1,...,j_{m-l})$ satisfying
\begin{align}\label{e:jml}
1j_1+2j_2+...+(m-l)j_{m-l}=m-l
\end{align}
Any term in the commutator takes the form,
\begin{align}
\mathcal{L}_{\p^{m-l}\phi}\p^lf(x) = \int_{\mathbb{T}^n}\frac{h(|z|)}{|z|^{n+\alpha-\tau}}\p^{m-l}[d^{-\tau}(x,x+z)]\delta_z\p^lf(x)dz
\end{align}

Then any term in the derivative will take the form
\begin{align}\label{e:Ij}
I_{\bj}[\p^l f](x):=\int_{\mathbb{T}^n} \frac{h(|z|)}{|z|^{n+\alpha-\tau}}\frac{\prod_{k=1}^{m-l}\left(\int_{\Omega(x,x+z)}\partial^k \rho(\xi)d\xi\right)^{j_k}}{d^{\tau+|\bj| n}(x,x+z)}\delta_z\p^lf(x)\dz
\end{align}
where $|\bj|=\sum_{k=1}^{m-l} j_k$.  

\bigskip
\noindent
{\sc Case $0<\a<1$.}
First, we will look at $\int_{\Omega(x,x+z)}\partial^k \rho(\xi)\dxi$. We estimate it with the use of the  Hardy-Littlewood maximal function:
\[
\left| \int_{\Omega(x,x+z)}\partial^k \rho(\xi)\dxi \right| \leq |z|^n  \frac{1}{|z|^n} \int_{\Omega(x,x+z)} |\partial^k \rho(\xi)|\dxi \leq |z|^n  M[\p^k \rho](x),
\]
where 
\[
M[g](x) = \sup_{r> 0} \frac{1}{r^n} \int_{B_r(x)} |g(\xi)|\dxi .
\]
So,
\[
| I_{\bj}[\p^l f](x) | \leq \prod_{k=1}^{m-l}(M[\p^k \rho](x))^{j_k} \int_{\mathbb{T}^n} h(|z|)|\delta_z\p^lf(x)|\frac{\dz}{|z|^{n+\a}}
\]
To estimate the $L^2$-norm of $I_{\bj}[\p^lf]$ we pick a set of conjugate exponents $p_k, q$ such that
\[
\sum_{k=1}^{m-l} \frac{2j_k}{p_k} + \frac2q = 1
\]
and apply H\"older inequality
\[
\begin{split}
\| I_{\bj}[\p^l f]\|_2^2 & \leq  \prod_{k=1}^{m-l}\| M[\p^k \rho] \|_{{p_k}}^{2j_k} \left(\int_{\T^n}\left( \int_{\mathbb{T}^n} h(|z|)|\delta_z\p^lf(x)|\frac{\dz}{|z|^{n+\a}}  \right)^q \dx \right)^{\frac2q} \\
\intertext{by the classical Hardy-Littlewood inequality,}
& \lesssim \prod_{k=1}^{m-l}\| \p^k \rho \|_{{p_k}}^{2j_k} \left(\int_{\T^n}\left( \int_{\mathbb{T}^n} h(|z|)|\delta_z\p^lf(x)|\frac{\dz}{|z|^{n+\a}}  \right)^q \dx \right)^{\frac2q}\\
& \lesssim \prod_{k=1}^{m-l}\| \p^k \rho \|_{{p_k}}^{2j_k} \| \p^l f\|_{W^{\a+\e,q}}^2\\
\intertext{by the Sobolev embeddings,}
& \leq  \prod_{k=1}^{m-l}\|  \rho \|_{\dot{H}^{k+n( \frac12 - \frac{1}{p_k})}}^{2j_k} \| f\|_{\dot{H}^{l+\a+\e + n(\frac12 - \frac1q)}}^2\\
\intertext{Let us make the following choice of exponents: $p_k = \frac{2m}{k}$, $q = \frac{2m}{l}$. Then }
& \leq   \prod_{k=1}^{m-l}\|  \rho \|_{\dot{H}^{k+\frac{n}{2}( 1 - \frac{k}{m})}}^{2j_k} \| f\|_{\dot{H}^{l+\a+\e + \frac{n}{2}(1- \frac{l}{m})}}^2.
\end{split}
\]
 Examining the regularity of the density norms obtained on the last line, we observe that  for all $k = 1,\ldots,m-1$ we have 
\[
k+\frac{n}{2}( 1 - \frac{k}{m}) \leq m-1 + \a,
\]
provided $m$ is large enough. So, the whole density product becomes bounded by a lower order term for all $l=1,\ldots,m-1$:
\[
\prod_{k=1}^{m-l}\|  \rho \|_{\dot{H}^{k+\frac{n}{2}( 1 - \frac{k}{m})}}^{2j_k} \leq \| \rho\|_{\dot{H}^{m-1+\a}}^N,
\]
for some possibly large $N$ (we take the liberty of changing $N$ from line to line in the sequel). When $l=0$, the product above still satisfies the same estimate for all multi-indeces $\bj$ except one where $k=m$, which can only happen if $\bj = (0,\ldots,0,1)$ due to the restriction given by \eqref{e:jml}.  In this case the density term reaches higher order norm $\|  \rho \|_{\dot{H}^{m}}^2$.

As to the $f$-term, we have  for $l \leq m-2$
\[
  l+\a+\e + \frac{n}{2}(1- \frac{l}{m}) < m-1 + \a,
 \]
which contributes the lower order  term. So, in this case, given the density estimates above, we have
\[
\| I_{\bj}[\p^l f]\|_2^2 \leq   \|  \rho \|_{\dot{H}^{m-1+\a}}^N \|  f \|_{\dot{H}^{m-1+\a}}^2+ \|  \rho \|_{\dot{H}^{m}}^2 \|  f \|_{\dot{H}^{2+ \frac{n}{2}}}^2 \leq R(\rho,f), \quad l=0,\ldots,m-2.
\]

For future reference let us record the estimate for the particular  subcase when $l=0$, $j_m=0$:
\begin{equation}\label{e:Ioj}
\| I_{(j_1,\ldots,j_{m-1},0)}[f]\|_2^2 \leq  \|  \rho \|_{\dot{H}^{m-1+\a}}^N \|  f \|_{\dot{H}^{2+ \frac{n}{2}}}^2.
\end{equation}
For the remaining case of $l=m-1$ we have $k=1$, $j_1 = 1$. So,  as far as regularity of $f$,
 \[
 m-1+\a+\e + \frac{n}{2m} < m+ \a - \frac12,
\]
and hence,
\[
\| I_{(1)}[\p^{m-1} f]\|_2^2 \leq  \|  \rho \|_{\dot{H}^{1+\frac{n}{2}}}^2  \|  f \|_{\dot{H}^{m+\a - \frac12}}^2 \leq R(\rho,f).
\]

The obtained estimates cover all the cases, so in summary we have obtained
\begin{equation}
	\| \mathcal{L}_{\p^{m-l}\phi}\p^lf \|_2^2 \leq R(\rho,f).
\end{equation}
which proves  \eqref{e:maincomm}.

\bigskip
\noindent
{\sc Case $1\leq \a<2$.} This is a more involved case since for the application of the Gagliardo-Sobolevskii norm one has to include the next term in the Taylor finite difference of $f$:  $\delta_z\p^lf(x) - z \cdot \n \p^l f(x)$. We therefore add and subtract that term in the formula for $I_\bj[\p^l f](x)$:
\[
\begin{split}
I_\bj[\p^l f](x)& =\int_{\mathbb{T}^n} \frac{h(|z|)}{|z|^{n+\alpha-\tau}}\frac{\prod_{k=1}^{m-l}\left(\int_{\Omega(x,x+z)}\partial^k \rho(\xi)d\xi\right)^{j_k}}{d^{\tau+|\bj| n}(x,x+z)}[ \delta_z\p^lf(x) - z \cdot \n \p^l f(x) ] \dz \\
&+ \int_{\mathbb{T}^n} \frac{h(|z|)}{|z|^{n+\alpha-\tau}}\frac{\prod_{k=1}^{m-l}\left(\int_{\Omega(x,x+z)}\partial^k \rho(\xi)d\xi\right)^{j_k}}{d^{\tau+|\bj| n}(x,x+z)} z \cdot \n \p^l f(x)  \dz \\
& : = I_{\bj,1}[\p^l f](x)+I_{\bj,2}[\p^l f](x).
\end{split}
\]
The estimate on $I_{\bj,1}[\p^l f]$ goes in exact same way as in the previous case noting that the Gagliardo-Sobolevskii definition applies to smoothness exponents away from the interger values, $2>\a+\e >1$.  In $I_{\bj,2}[\p^l f]$ we symmetrize first
\[
\begin{split}
I_{\bj,2}[\p^l f](x) & =  \n \p^l f(x)  \cdot \int_{\mathbb{T}^n} \frac{h(|z|)}{|z|^{n+\alpha-\tau}}\left[ \frac{\prod_{k=1}^{m-l}\left(\int_{\Omega(x,x+z)}\partial^k \rho(\xi)d\xi\right)^{j_k}}{d^{\tau+|\bj| n}(x,x+z)} -  \frac{\prod_{k=1}^{m-l}\left(\int_{\Omega(x,x-z)}\partial^k \rho(\xi)d\xi\right)^{j_k}}{d^{\tau+|\bj| n}(x,x-z)} \right] z \dz \\
& =   \n \p^l f(x)  \cdot \int_{\mathbb{T}^n} \frac{h(|z|)}{|z|^{n+\alpha-\tau}}\prod_{k=1}^{m-l}\left(\int_{\Omega(x,x+z)}\partial^k \rho(\xi)d\xi\right)^{j_k} \left[ \frac{d^{\tau+|\bj| n}(x,x-z) - d^{\tau+|\bj| n}(x,x+z)}{d^{\tau+|\bj| n}(x,x+z)d^{\tau+|\bj| n}(x,x-z)}\right] z \dz \\
& + \n \p^l f(x)  \cdot \int_{\mathbb{T}^n} \frac{h(|z|)}{|z|^{n+\alpha-\tau}d^{\tau+|\bj| n}(x,x-z)}\left[ \prod_{k=1}^{m-l}\left(\int_{\Omega(x,x+z)}\partial^k \rho(\xi)d\xi\right)^{j_k} - \right.\\
& \ \ \ \ \ \ \ \ \ \ \ \ \ \ \ \ \ \ \ \ \ \ \ \ - \left.  \prod_{k=1}^{m-l}\left(\int_{\Omega(x,x-z)}\partial^k \rho(\xi)d\xi\right)^{j_k} \right] z \dz \\
& = I_{\bj,2,1}[\p^l f](x) + I_{\bj,2,2}[\p^l f](x)
\end{split}
\]
By a straightforward computation,
\[
|d^{\tau+|\bj| n}(x,x-z) - d^{\tau+|\bj| n}(x,x+z)| \leq |\n \rho|_\infty |z|^{\t + |\bj| n +1}.
\]
With this at hand we proceed to estimate $I_{\bj,2,1}[\p^l f](x)$:
\[
\begin{split}
| I_{\bj,2,1}[\p^l f](x) | \leq  | \n \p^l f(x) |  \prod_{k=1}^{m-l}(M[\p^k \rho](x))^{j_k}  \int_{\T^n} h(|z|) \frac{\dz}{|z|^{n+\a - 2}}.
\end{split}
\]
Since $\a<2$, the integral converges. Thus,
\[
\| I_{\bj,2,1}[\p^l f]\|_2^2  \leq \prod_{k=1}^{m-l}\| \p^k \rho \|_{{p_k}}^{2j_k} \| \p^{l+1} f\|_{q}^2 \leq  \prod_{k=1}^{m-l}\|  \rho \|_{\dot{H}^{k+n( \frac12 - \frac{1}{p_k})}}^{2j_k} \| f\|_{\dot{H}^{l+1 + n(\frac12 - \frac1q)}}^2.
\]
Since $l+1 < l+ \a+\e$ by further increasing the smoothness of $f$ the estimate blends with the previous case.   

It remains to estimate $I_{\bj,2,2}[\p^l f](x)$. To do this we must estimate
\begin{align*}
\prod_{k=1}^{m-l}\left(\int_{\Omega(x,x+z)}\partial^k \rho(\xi)\dxi\right)^{j_k}-\prod_{k=1}^{m-l}\left(\int_{\Omega(x,x-z)}\partial^k \rho(\xi)\dxi\right)^{j_k}
\end{align*}
We can rewrite such a difference as
\begin{align*}
\prod_{k=1}^{m-l}a_k^{j_k}-\prod_{k=1}^{m-l}b_k^{j_k}=\sum_{k=1}^{m-l}a_1^{j_1}\cdots a_{k-1}^{j_{k-1}}(a_k^{j_k}-b_k^{j_k})b_{k+1}^{j_{k+1}}\cdots b_{m-l}^{j_{m-l}}
\end{align*}
and furthermore,
\begin{align*}
a_k^{j_k}-b_k^{j_k}=(a_k-b_k)(a_k^{j_k-1}+a_k^{j_k-2}b_k+\cdots +a_kb_k^{j_k-2}+b_k^{j_k-1}).
\end{align*}
We will focus on the main difference $a_k-b_k$, while estimating all other terms with the maximal function like before. We write, letting $s=\a-1+\e<1$,
\begin{align*}
& \int_{\Omega(x,x+z)}\p^k\rho(\xi)\dxi-\int_{\Omega(x,x-z)}\p^k\rho(\xi)d\xi=\int_{\Omega(0,z)} \p^k\rho(x+\xi)-\p^k\rho(x-\xi)\dxi\\
& = \int_{\Omega(0,z)} \frac{\p^k\rho(x+\xi)-\p^k\rho(x-\xi)}{|\xi|^{\frac{n}{p_k}+s}}|\xi|^{\frac{n}{p_k}+s} d\xi \lesssim \left( \int_{\Omega(0,z)} \frac{|\p^k\rho(x+\xi)-\p^k\rho(x - \xi)|^{p_k}}{|\xi|^{n+sp_k}}d\xi\right)^{1/p_k} |z|^{n+s}\\
&:=(D_{s,p_k}\p^k\rho(x))^{1/p_k}|z|^{n+s}
\end{align*}
where $\int D_{s,p}g(x) dx=\|g\|_{W^{s,p}}^p$. Then we can estimate the difference in the products by
\begin{align}
\prod_{k=1}^{m-l}&\left(\int_{\Omega(x,x+z)}\p^k\rho(\xi)\dxi\right)^{j_k}-\prod_{k=1}^{m-l}\left(\int_{\Omega(x,x-z)}\p^k\rho(\xi)\dxi\right)^{j_k}\\
&\lesssim \sum_{k=1}^{m-l} \prod_{\substack{i=1\\ i\neq k}}^{m-l} (M[\p^i\rho](x))^{j_i}(M[\p^k\rho](x))^{j_k-1} (D_{s,p_k} \p^k\rho(x))^{1/p_k}|z|^{|\bj| n+s}\nonumber
\end{align}
Therefore returning to $I_{\bj,2,2}[\p^l f]$, we estimate in $L^2$, using the same Holder conjugates as before,
\begin{align}
&\| I_{\bj,2,2}[\p^l f] \|_2^2 \nonumber\\
& \ \ \ \ \ \ \ \lesssim \int_{\mathbb{T}^n}|\nabla \p^lf(x)|^2\left(\int_{\mathbb{T}^n}\frac{h(|z|)}{|z|^{n+\alpha-1-s}} \dz\right)^2 \times \\
& \ \ \ \ \ \ \ \ \ \ \ \ \times \left(\sum_{k=1}^{m-l} \prod_{\substack{i=1\\ i\neq k}}^{m-l} (M[\p^i\rho](x))^{j_i}(M[\p^k\rho](x))^{j_k-1} (D_{s,p_k} \p^k\rho(x))^{1/p_k}\right)^2 \dx\nonumber\\
& \ \ \ \ \ \ \ \lesssim \|\p^{l+1}f\|^2_q\left(\prod_{\substack{i=1 \\ i \neq k}}^{m-l} \|\p^i \rho\|_{p_i}^{2j_i}\right)\|\p^k \rho\|_{p_k}^{2(j_k-1)}\|\p^k\rho\|_{W^{s,p_k}}^2\nonumber\\
& \ \ \ \ \ \ \ \leq \|f\|_{\dot{H}^{l+1+n(\frac{1}{2}-\frac{1}{q})}}^2\prod_{\substack{i=1 \\ i \neq k}}^{m-l} \|\rho\|_{\dot{H}^{i+n(\frac{1}{2}-\frac{1}{p_k})}}^{2j_i}\|\rho\|_{\dot{H}^{k+n(\frac{1}{2}-\frac{1}{p_k})}}^{2(j_k-1)}\|\rho\|_{\dot{H}^{k+s+n(\frac{1}{2}-\frac{1}{p_k})}}^2\nonumber\\
&\ \ \ \ \ \ \ =\|f\|_{\dot{H}^{l+1+\frac{n}{2}(1-\frac{l}{m})}}^2\prod_{\substack{i=1 \\ i \neq k}}^{m-l} \|\rho\|_{\dot{H}^{i+\frac{n}{2}(1-\frac{i}{m})}}^{2j_i}\|\rho\|_{\dot{H}^{k+\frac{n}{2}(1-\frac{k}{m})}}^{2(j_k-1)}\|\rho\|_{\dot{H}^{k+s+\frac{n}{2}(1-\frac{k}{m})}}^2\nonumber
\end{align}
As before let us examine regularity of the density first. In any case when the top $j$-index vanishes, $j_m=0$, so that $i,k \in \{1,...,m-1\}$ we have 
\begin{align*}
i+\frac{n}{2}(1-\frac{i}{m}) &\leq m-1+\alpha\\
k+s+\frac{n}{2}(1-\frac{k}{m}) &\leq m-1+\a
\end{align*}
if $m$ is large enough.  So, in this case the entire product of densities is controlled by the lower order norm:
\[
\prod_{\substack{i=1 \\ i \neq k}}^{m-l} \|\rho\|_{\dot{H}^{i+\frac{n}{2}(1-\frac{i}{m})}}^{2j_i}\|\rho\|_{\dot{H}^{k+\frac{n}{2}(1-\frac{k}{m})}}^{2(j_k-1)}\|\rho\|_{\dot{H}^{k+s+\frac{n}{2}(1-\frac{k}{m})}}^2 \leq \|\rho\|_{\dot{H}^{m-1 + \a}}^N.
\]
This applies in particular for all $l=1,\ldots,m-1$ and even in the case $l=0$ with $\bj=(j_1,...,j_{m-1},0)$. Note that this also extends \eqref{e:Ioj} to the entire range of $\a$'s, $0<\a<2$.

When $k=m$ which is only attainable at $l=0$, $j_m=1$ case, we are off by $\e$: the product collapses to only one norm $\|\rho\|_{\dot{H}^{m-1+\a+\e}}^2$ while the $f$-term is of low order:
\[
\| I_{\bj,2,2}[f] \|_2^2 \leq  \|\rho\|_{\dot{H}^{m-1+\a+\e}}^2 \|f\|_{\dot{H}^{1+\frac{n}{2}}}^2 \leq  \|\rho\|_{\dot{H}^{m-\frac12+\a}}^2 \|f\|_{\dot{H}^{1+\frac{n}{2}}}^2 \leq R(\rho,f).
\]
Combined with the other $\bj$-indeces, the case $l=0$ altogether gives the estimate above. 

Next, for $l=1,\ldots,m-2$, 
\[
l+1+\frac{n}{2}(1-\frac{l}{m}) \leq m-1+\alpha.
\]
So,
\[
\| I_{\bj,2,2}[f] \|_2^2 \leq \|f\|_{\dot{H}^{m-1+\a}}^2 \|\rho\|_{\dot{H}^{m-1+\a}}^2 \leq R(\rho,f).
\]
For the only remaining case $l=m-1$,  the regularity exponent for $f$ is
\[
 m+\frac{n}{2m} \leq m+\frac{\a}{2},
\]
while the density product is of course of lower than $m-1+\a$ order as elucidated above.  So, we arrive at 
\[
\| I_{(1),2,2}[\p^{m-1} f]\|_2^2 \lesssim \|  \rho \|_{\dot{H}^{m-1+\a}}^N \|  f \|_{\dot{H}^{m+\frac{\a}{2}}}^2  \leq R(\rho,f).
\]
\end{proof}

\section{A priori estimates on the velocity equation}\label{s:u}
The goal of this section is to establish a priori bound
\begin{equation}
\p_t \| u\|_{\dH^{m+1}}^2 \leq C Y_m^N.
\end{equation}
Let us rewrite the velocity equation as
\[
\begin{split}
u_t + u \cdot \n u & = \cC_\phi(u,\rho), \\
\cC_\phi(u,\rho)(x) & = \int_{\T^n} \phi(x,x+z) \d_z u(x)  \rho(x+z) \dz = \cL_\phi(u\rho) - u \cL_\phi \rho.
\end{split}
\]
Let us apply $\p^{m+1}$ and test with $\p^{m+1} u$. We have (dropping integrals signs)
\[
\p_t \| u\|_{\dH^{m+1}}^2  = - \p^{m+1} (u \cdot \n u) \cdot \p^{m+1}  u +  \p^{m+1} \cC_\phi(u,\rho)\cdot \p^{m+1}  u.
\]
The transport term is estimated using the classical commutator estimate
\[
\p^{m+1} (u \cdot \n u) \cdot \p^{m+1}  u = u \cdot \n  (\p^{m+1} u) \cdot \p^{m+1}  u + [\p^{m+1} , u] \n u \cdot \p^{m+1}  u 
\]
Then
\[
u \cdot \n  (\p^{m+1} u) \cdot \p^{m+1}  u = - \frac12 (\n \cdot u) | \p^{m+1}  u|^2 \leq |\n u|_\infty \| u\|_{\dH^{m+1}}^2,
\]
and using  \eqref{e:classcomm} for $f = u$, $g = \n u$, we obtain
\[
|[\p^{m+1} , u] \n u \cdot \p^{m+1}  u  | \leq  |\n u|_\infty \| u\|_{\dH^{m+1}}^2.
\]
Thus,
\[
\p_t \| u\|_{\dH^{m+1}}^2  \leq \| u\|_{\dH^{m+1}}^3 +  \p^{m+1} \cC_\phi(u,\rho)\cdot \p^{m+1}  u.
\]

In the rest of the argument we focus on estimating the commutator term.  So, we expand by the product rule
\begin{equation}
	\p^{m+1} \cC_\phi(u,\rho) = \sum_{k=k_1+k_2 = 0}^{m+1} \frac{(m+1)!}{k_1!k_2!(m+1 - k)!}\cC_{\p^{m+1-k} \phi}(\p^{k_1} u, \p^{k_2} \rho).
\end{equation}
Various term in this expansion will be estimated differently. There is however one end-point term which provides necessary dissipation : 
\begin{equation}\label{e:com+1}
\cC_\phi(\p^{m+1} u,\rho) \cdot \p^{m+1} u \leq  - \frac{\rmin}{|\rho|_\infty^{\t/n}}  \| u\|_{\dH^{m+1+\frac{\a}{2}}}^2.
\end{equation}
Note that this particular term eventually guarantees inclusion of the velocity into class $L^2H^{m+1+\frac{\a}{2}}$.

\bigskip
\noindent
{\sc Case $k=1,\ldots,m$.} 
The bulk of the terms can be estimated simultaneously. Those correspond to the range $k = 1,\ldots, m$. We start by the standard symmetrization:
\[
\begin{split}
\int_{\T^n} \cC_{\p^{m+1-k}  \phi} ( \p^{k_1} u, \p^{k_2} \rho) \cdot \p^{m+1} u  \dx & = \int_{\T^{2n}} \d_z \p^{k_1} u (x) \p^{k_2} \rho (x+z) \p^{m+1} u(x) \p^{m+1-k}  \phi(x,x+z) \dz \dx\\
& = \frac12 \int_{\T^{2n}} \d_z \p^{k_1} u (x) \d_z \p^{k_2} \rho (x) \p^{m+1} u(x) \p^{m+1-k}  \phi(x,x+z) \dz \dx\\
& + \frac12 \int_{\T^{2n}} \d_z \p^{k_1} u (x) \p^{k_2} \rho (x) \d_z \p^{m+1} u(x) \p^{m+1-k}  \phi(x,x+z) \dz \dx\\
& = J_1 + J_2.
\end{split}
\]
In the Faa di Bruno expansion of the kernel $ \p^{m+1-k}  \phi(x,x+z) $ we use obtain a set of terms, again, labeled by $\bj = (j_1,...,j_{m+1-k-l})$ with
	\[
	1j_1+...+(m+1-k-l)j_{m+1-k-l} = m+1-k-l.
	\]
With the use of the Hardy-Littlewood maximal function as before we obtain 
\[
J_1 \leq \sum_{\bj} \int_{\T^{2n}} |\d_z \p^{k_1} u (x) \d_z \p^{k_2} \rho (x) \p^{m+1} u(x)| \prod_{l=1}^{m+1-k}(M[\p^l \rho](x))^{j_l} \frac{\dz}{|z|^{n+\a}} \dx
\]
We pick a set of exponents $q_i = \frac{2(m+1)}{k_i}$, $p_l = \frac{2(m+1)}{l}$:
\[
\frac12+ \frac{1}{q_1} + \frac{1}{q_2} + \sum_{l=1}^{m+1-k} \frac{j_l}{p_l} = 1.
\]
We have
\[
\begin{split}
J_1& \leq \sum_{\bj} \int_{\T^{2n}} \frac{|\d_z \p^{k_1} u (x)|}{|z|^{\frac{n}{q_1}+\frac{\a k}{m} +\e}} \frac{| \d_z \p^{k_2} \rho (x) |}{|z|^{\frac{n}{q_2}+\frac{\a(m-k)}{m}}} \frac{|\p^{m+1} u(x)|}{|z|^{\frac{n}{2}-\e}} \prod_{l=1}^{m+1-k}\frac{(M[\p^l \rho](x))^{j_l}}{|z|^{\frac{n j_l}{p_l}}} \dz \dx\\
& \leq \| u \|_{\dH^{k_1 + \frac{\a k}{m} + \frac{n}{2} \frac{m+1-k_1}{m+1} +\e  }} \| \rho \|_{\dH^{k_2 + \frac{\a (m-k)}{m} + \frac{n}{2} \frac{m+1-k_2}{m+1} }} \| u\|_{ \dH^{m+1} }   \prod_{l=1}^{m+1-k}   \| \rho \|^{j_l}_{\dH^{l + \frac{n}{2} \frac{m+1-l}{m+1} }}
\end{split}
\]
Provided $m$ is large enough and $\e$ is small enough we have
\[
\begin{split}
u&: \quad k_1 + \frac{\a k}{m} + \frac{n}{2} \frac{m+1-k_1}{m+1} +\e  < m+1 + \frac{\a}{2},\\
\rho&: \quad  k_2 + \frac{\a (m-k)}{m} + \frac{n}{2} \frac{m+1-k_2}{m+1} < m+\a\\
\rho&: \quad  l  + \frac{n}{2} \frac{m+1-l}{m+1} < m + \a,
\end{split}
\]
for all $k_1 +k_2 = k$, $l=1,...,m+1-k$, $k=1,...,m$.  Thus,
\[
J_1 \leq Y_m^{N} + \e  \|u \|_{\dH^{m+1+ \frac{\a}{2}}}^2
\]
($N$ will change from line to line). Note that the last term can be hidden into dissipation \eqref{e:com+1}. Moving on to $J_2$,
\[
\begin{split}
J_2& \leq \sum_{\bj} \int_{\T^{2n}} \frac{|\d_z \p^{k_1} u (x)|}{|z|^{\frac{n}{q_1}+\frac{\a}{2} +\e}} \frac{|\p^{k_2} \rho (x) |}{|z|^{\frac{n}{q_2} -\e}} \frac{|\d_z \p^{m+1} u(x)|}{|z|^{\frac{n+\a}{2}}} \prod_{l=1}^{m+1-k}\frac{(M[\p^l \rho](x))^{j_l}}{|z|^{\frac{nj_l}{p_l}}} \dz \dx\\
& \leq \| u \|_{\dH^{k_1 + \frac{\a k}{m} + \frac{n}{2} \frac{m+1-k_1}{m+1} +\e  }} \| \rho \|_{\dH^{k_2 + \frac{\a (m-k)}{m} + \frac{n}{2} \frac{m+1-k_2}{m+1} }} \| u\|_{ \dH^{m+1} }   \prod_{l=1}^{m+1-k}   \| \rho \|^{j_l}_{\dH^{l + \frac{n}{2} \frac{m+1-l}{m+1} }}
\end{split}
\]

 We now examine the remaining end-point cases.

\bigskip
\noindent
{\sc Case $k=0$.} Here we deal with only one term
\[
\cC_{\p^{m+1} \phi}(u,\rho) = \mathcal{L}_{\partial^{(m+1)}\phi}[u\rho]  - u \mathcal{L}_{\partial^{(m+1)}\phi} \rho.
\]
In the Faa di Bruno expansion of the kernel, we single out again the case $\bj = (0,...,0,1)$ from the rest, because in the rest of the cases  $\bj =(j_1,...,j_m,0)$ we do not have to use the commutator structure at all.  Instead we have by \eqref{e:Ioj}, (noting that $m\to m+1$) and the control bound \eqref{e:rhoY},
\[
\begin{split}
\int I_{\bj}[u\rho] \cdot \p^{m+1} u  \dx  &\leq  \|I_{\bj}[u\rho]\|_2^2 \|u \|_{\dH^{m+1}} \leq p_N(\| \rho\|_{\dot{H}^{m+\a}}) \|u\rho \|_{\dot{H}^{2+\frac{n}{2}}}^2 \|u \|_{\dH^{m+1}} \\
&\lesssim 1+ \| \rho\|_{\dot{H}^{m+\a}}^{N} + \|u \|_{\dot{H}^{2+\frac{n}{2}}}^8 + \|u \|_{\dH^{m+1}}^2 \leq Y_m^{N}.
\end{split}
\]
And similarly,
\[
\int u I_{\bj}[\rho] \cdot \p^{m+1} u  \dx  \leq  |u|_\infty \| I_{\bj}[\rho] \|_2^2 \|u \|_{\dH^{m+1}} \leq Y_m^{N}.
\]

Let us consider now the more involved term corresponding to $\bj = (0,...,0,1)$. In this case
\[
\begin{split}
\int ( I_{\bj}[u\rho] - u  I_{\bj}[\rho] )  \cdot \p^{m+1} u  \dx & = \int_{\mathbb{T}^{2n}} \frac{h(|z|) \int_{\Omega(x,x+z)}\partial^{m+1} \rho(\xi)\dxi  }{|z|^{n+\alpha-\tau} d^{\tau+n}(x,x+z) } \d_z u(x) \rho(x+z)  \p^{m+1} u(x)  \dz \dx\\
\intertext{after symmetrization,}
&= \frac12 \int_{\mathbb{T}^{2n}} \frac{h(|z|) \int_{\Omega(x,x+z)}\partial^{m+1} \rho(\xi)\dxi  }{|z|^{n+\alpha-\tau} d^{\tau+n}(x,x+z) } \d_z u(x) \d_z \rho(x)  \p^{m+1} u(x)  \dz \dx  \\
&+  \frac12 \int_{\mathbb{T}^{2n}} \frac{h(|z|) \int_{\Omega(x,x+z)}\partial^{m+1} \rho(\xi)\dxi  }{|z|^{n+\alpha-\tau} d^{\tau+n}(x,x+z) } \d_z u(x)  \rho(x) \d_z \p^{m+1} u(x)  \dz \dx
\end{split}
\]
The highest density term suffers a derivative overload and needs to be reduced:
\[
\begin{split}
\int_{\Omega(x,x+z)}\partial^{m+1} \rho(\xi)\dxi  & = \int_{\p \Omega(x,x+z)}\partial^{m} \rho(\xi ) \nu_\xi \dxi =  \int_{\p \Omega(0,z)}\partial^{m} \rho(x+ \xi ) \nu_\xi \dxi \\
& = |z|^{n-1} \int_{\p \Omega(0,\be_1)}\partial^{m} \rho\left(x+ |z| U_z \th \right) \nu_\th \dth \\
\intertext{where $U_z$ is the orthogonal transformation mapping $\be_1$ to $\hat{z}$,}
& = |z|^{n-1} \int_{\p \Omega(0,\be_1)} \left[\partial^{m} \rho\left(x+ |z| U_z \th   \right)- \p^m \rho (x) \right]  \nu_\th \dth 
\end{split}
\] 
We recover one power of $z$ by $|\d_z u | \leq |z| \| \n u \|_\infty$ and in the first intergal $|\d_z \rho | \leq |z| \| \n \rho \|_\infty$.  Putting together we estimate the integrals by
\[
\begin{split}
& \leq  \| \n u \|_\infty \| \n \rho \|_\infty   \int_{\p \Omega(0,\be_1)} \iint_{\mathbb{T}^{2n}} \frac{h(|z|)  \left|\partial^{m} \rho\left(x+ |z| U_z \th   \right)- \p^m \rho (x) \right|  }{|z|^{\frac{n}{2}+\alpha -\frac12} }   \frac{|\p^{m+1} u(x)|}{|z|^{\frac{n}{2} - \frac12}}  \dz \dx  \dth\\
& +  \| \n u \|_\infty \|\rho\|_\infty  \int_{\p \Omega(0,\be_1)} \iint_{\mathbb{T}^{2n}} \frac{h(|z|)  \left|\partial^{m} \rho\left(x+ |z| U_z \th   \right)- \p^m \rho (x) \right|  }{|z|^{\frac{n}{2}+\frac{\a}{2}} }  \frac{|\d_z \p^{m+1} u(x)|}{|z|^{ \frac{n}{2} + \frac{\a}{2} }}  \dz \dx  \dth\\
& \leq \| \n u \|_\infty \| \n \rho \|_\infty  \|u \|_{\dH^{m+1}} +  \| \n u \|_\infty \|\rho\|_\infty  \|u \|_{\dH^{m+1+ \frac{\a}{2}}} ( D_{2\a - 1} (\p^m \rho ) + D_{\a} (\p^m \rho )),
\end{split}
\]
where 
\[
D_s(g) = \int_{\mathbb{T}^{2n}} \frac{h(z)  \left|g \left(x+ |z| U_z \th   \right)- g (x) \right|^2 }{|z|^{n+s} }  \dz \dx .
\]
By \lem{l:SobU} this expression is bounded by the $H^{\frac{s}{2}}$ norm.  Thus,
\begin{equation}
\int( I_{\bj}[u\rho] - u  I_{\bj}[\rho] )   \cdot \p^{m+1} u  \dx  \leq \e  \|u \|_{\dH^{m+1+ \frac{\a}{2}}}^2 + Y_m^N.
\end{equation}

\bigskip
\noindent
{\sc Case $k=m+1$.}   In this case the kernel gets no derivatives, however, we deal with a total of $m$ terms $\cC_{\phi} ( \p^l u, \p^{m+1-l} \rho)$ for $l=0,\ldots,m$ (note that the case $l=m+1$ yields the dissipative term which has been considered already).  Let us consider first the end-point case of $l=0$.  In this case the density suffers a derivative overload. We apply the following ``easing" technique:
\[
\int_{\T^n} \cC_{\phi} ( u, \p^{m+1} \rho) \cdot \p^{m+1} u  \dx  = \iint_{\T^{2n}} \phi(x,x+z) \d_z u(x) \p^{m+1} \rho(x+z) \p^{m+1} u(x) \dz \dx.
\]
We observe that 
\[
\p^{m+1} \rho(x+z)  = \p_z \p_x^{m} \rho(x+z)  = \p_z ( \p_x^m \rho(x+z)  - \p_x^m \rho(x)) = \p_z \d_z \p^m \rho(x).
\]
Now we integrate by parts in $z$:
\begin{multline*}
\int_{\T^n} \cC_{\phi} ( u, \p^{m+1} \rho) \cdot \p^{m+1} u  \dx	=  \iint_{\T^{2n}} \p_z \phi(x,x+z) \d_z u(x)  \d_z \p^m \rho(x) \p^{m+1} u(x) \dz \dx \ + \\
	+ \iint_{\T^{2n}}  \phi(x,x+z) \p u(x+z)   \d_z \p^m \rho(x) \p^{m+1} u(x) \dz \dx := J_1+ J_2.
\end{multline*}
Let us examine $J_2$ first. By symmetrization,
\[
\begin{split}
J_2 &=  \iint_{\T^{2n}} \d_z \p u(x)   \d_z \p^m \rho(x) \p^{m+1} u(x) \phi \dz \dx -  \iint_{\T^{2n}}  \p u(x)   \d_z \p^m \rho(x) \d_z \p^{m+1} u(x) \phi \dz \dx: = J_{2,1} + J_{2,2} \\
J_{2,1} & \leq \| \n^2 u \|_\infty  \iint_{\T^{2n}} | \d_z \p^m \rho(x)|\frac{\dz}{|z|^{n+\a -1} } | \p^{m+1} u(x) | \dx \leq \| \n^2 u \|_\infty   \|\rho \|_{\dH^{m-1+\a+\e}}  \|u \|_{\dH^{m+1}} \leq Y_m^N,\\
J_{2,2} & \leq  \| \n u \|_\infty  \|\rho \|_{ \dH^{m+\frac{\a}{2}} }  \|u \|_{\dH^{m+1+\frac{\a}{2}}} \leq \e \|u \|_{\dH^{m+1+\frac{\a}{2}}}^2 + Y_m^N.
\end{split}
\]
As to $J_1$, let is first observe that $\p_z \phi(x,x+z)= \psi(x,x+z) $ is antisymmetric, $\psi(x,y) = -\psi(y,x)$. Then, by symmetrization we have
\[
J_1 = \frac12 \iint_{\T^{2n}} \p_z \phi(x,x+z) \d_z u(x)  \d_z \p^m \rho(x) \d_z \p^{m+1} u(x) \dz \dx.
\]
Since  
\[
 \p_z \phi(x,x+z) = -(n+\a-\t) h(z) \frac{z_i}{|z|^{n+\a+2 - \t} d^\t} + h(z) \frac{\p_z \int_{\O(x,x+z)} \rho(\xi) \dxi}{|z|^{n+\a-\t} d^{\t + n}(x,x+z)} +  \frac{\p_z h(z)}{|z|^{n+\a- \t} d^\t}
\]
and noticing that
\[
\left| \p_z \int_{\O(x,x+z)} \rho(\xi) \dxi \right| \leq |\rho|_\infty |z|^{n-1},
\]
we can see that this kernel is of order $|z|^{-n-\a-1}$ up to the usual quantities bounded by $Y_m^N$. The one derivative loss is compensated by $|\d_z u(x)| \leq |z|\| \n u\|_\infty $.  With this at hand we estimate $J_1$:
\[
J_1 \leq Y_m^N \| \n u\|_\infty \|u \|_{\dH^{m+1+\frac{\a}{2}}}\|\rho \|_{ \dH^{m+\frac{\a}{2}} } \leq \e \|u \|_{\dH^{m+1+\frac{\a}{2}}}^2 + Y_m^N.
\]

Let us now examine the rest of the commutators $\cC_{\phi} ( \p^l u, \p^{m+1-l} \rho)$ for $k=1,\ldots, m+1$. After symmetrization we obtain
\begin{multline*}
	\int_{\T^n} \cC_{\phi} ( \p^l u, \p^{m+1-l} \rho) \cdot \p^{m+1} u  \dx = \frac12 \int_{\T^{2n}} \d_z \p^l u(x)   \d_z \p^{m+1-l} \rho(x) \p^{m+1} u(x) \phi \dz \dx+\\
	+ \int_{\T^{2n}} \d_z \p^l u(x)   \p^{m+1-l} \rho(x) \d_z \p^{m+1} u(x) \phi \dz \dx: = J_1 + J_2.
\end{multline*}
For $J_1$ we distribute the singularity of the kernel among the three terms 
\[
J_1 \leq \int_{\T^{2n}} \frac{|\d_z \p^l u(x)|}{|z|^{\frac{n}{p} + \frac{2\a}{q} + \e}}   \frac{|\d_z \p^{m+1-l} \rho(x)| }{|z|^{\frac{n}{q} + \frac{2\a}{p}}} \frac{ |\p^{m+1} u(x)|}{|z|^{\frac{n}{2} - \e}} \dz \dx,
\]
using a H\"older triple
\[
\frac1p + \frac1q + \frac12 = 1.
\]
We have
\[
J_1 \leq \|u \|_{\dW^{l+\e+ \frac{2\a}{q},p }} \|\rho \|_{\dW^{ m+1 - l +\frac{2\a}{p},q }} \|u \|_{\dH^{m+1}} \leq   \|u \|_{\dH^{l+\e+ \frac{2\a}{q} + n( \frac12 - \frac1p)}} \|\rho \|_{\dH^{ m+1 - l + \frac{2\a}{p}+ n( \frac12 - \frac1q) }}\|u \|_{\dH^{m+1}}.
\]
Choosing $p = 2\frac{m+1}{l}$ and $q = 2\frac{m+1}{m+1-l}$ we verify for all $l=1,\ldots,m$
\[
\begin{split}
u&: \quad l+ \e+ \frac{(m+1-l)\a}{m+1} + \frac{n(m+1-l)}{2(m+1)} < m+1+\frac{\a}{2},\\
\rho&: \quad   m+1 - l + \frac{l\a}{m+1}+ \frac{l n}{2(m+1)}  < m+\a,
\end{split}
\]
We conclude as before
 \[
 J_1 \leq  \e \|u \|_{\dH^{m+1+\frac{\a}{2}}}^2 + Y_m^N.
 \]
For $J_2$ the computation is similar:
\begin{multline*}
J_2 \leq \int_{\T^{2n}} \frac{|\d_z \p^l u(x)|}{|z|^{\frac{n}{p} + \e + \frac{\a}{2}}}   \frac{| \p^{m+1-l} \rho(x)| }{|z|^{\frac{n}{q} - \e}} \frac{ |\d_z \p^{m+1} u(x)|}{ |z|^{\frac{n}{2} + \frac{\a}{2} }} \dz \dx \leq  \|u \|_{\dW^{l+\e+ \frac{\a}{2},p }} \|\rho \|_{\dW^{ m+1 - l,q }} \\
\leq  \|u \|_{\dH^{l+\e+ \frac{\a}{2} + n( \frac12 - \frac1p)}} \|\rho \|_{\dH^{ m+1 - l + n( \frac12 - \frac1q) }}  \|u \|_{\dH^{m+1+ \frac{\a}{2}}} \leq \e \|u \|_{\dH^{m+1+\frac{\a}{2}}}^2 + Y_m^N,
\end{multline*}
where the last line follows by the same choice of $p,q$ and noting that
\[
\begin{split}
&u: \quad l+\e + \frac{\a}{2} +  \frac{n(m+1-l)}{2(m+1)} \leq m+1 \\
& \rho : \quad m+1 - l + \frac{l n}{2(m+1)}  \leq m+\a,
\end{split}
\]
for all $l=1,\ldots,m$.

\section{A priori estimates on the $e$-equation}\label{s:e}

Consider the quantity 
\begin{align*}
e=\nabla \cdot u + \mathcal{L}_{\phi}\rho.
\end{align*}
The goal of this section is to show
\begin{align*}
\ddt\|e\|_{\dot{H}^m}^2\leq CY_m^N.
\end{align*}
We have,
\begin{align*}
\rho_t+\nabla \cdot (\rho u)=0
\end{align*}
Due to the topological part of the model, the interaction kernel depends on the density $\rho$. Therefore the operator $\mathcal{L}_{\phi}$ does not commute with derivatives. Taking the divergence of the momentum equation and using the density equation and the $e$-quantity we get the identity
\begin{align*}
e_t+\nabla \cdot (ue)&=(\nabla \cdot u)^2-\mathrm{Tr}(\nabla u)^2+ \partial_t(\mathcal{L}_{\phi}(\rho))+\nabla \cdot \mathcal{L}_{\phi}(\rho u).
\end{align*}

Let us take a closer look the last two terms and work out a more explicit formula. For  the time derivative,
\begin{align*}
\partial_t(\mathcal{L}_{\phi}(\rho))=\mathcal{L}_{\phi}(\rho_t)+\mathcal{L}_{\phi_t}(\rho)
\end{align*}
where,
\begin{align*}
\mathcal{L}_{\phi_t}(\rho)&:=-\frac{\t}{n}\int_{\mathbb{T}^n}\frac{h(|z|)}{|z|^{n+\a-\t}}\frac{\int_{\O(x,x+z)}\rho_t(\xi) \dxi}{d^{\t+n}(x,x+z)}\d_z\rho(x) \dz\\
&=\frac{\t}{n}\int_{\mathbb{T}^n}\frac{h(|z|)}{|z|^{n+\a-\t}}\frac{\int_{\O(x,x+z)}\nabla \cdot (\rho u)(\xi) \dxi}{d^{\t+n}(x,x+z)}\d_z\rho(x) \dz\\
\end{align*}
Then looking at the divergence we have,
\begin{align*}
\nabla \cdot \mathcal{L}_{\phi}(\rho u)=\mathcal{L}_{\phi}(\nabla \cdot (\rho u))+\mathcal{L}_{\nabla \phi \cdot}(\rho u)
\end{align*}
where,
\begin{align*}
\mathcal{L}_{\nabla\phi \cdot}(\rho u)&=\int_{\mathbb{T}^n}\nabla \phi(x,x+z) \cdot \d_z(\rho u)(x) \dz\\
&=-\frac{\tau}{n}\int_{\mathbb{T}^n}\frac{h(|z|)}{|z|^{n+\alpha-\tau}}\frac{\int_{\Omega(x,x+z)} \nabla \rho(\xi) \dxi}{d^{\tau+n}(x,x+z)}\cdot \d_z(\rho u)(x) \dz
\end{align*}
Now using the density equation we see that the first terms in $\partial_t(\mathcal{L}_{\phi}(\rho))$ and $\nabla \cdot \mathcal{L}_{\phi}(\rho u)$ cancel, and becomes,
\begin{equation}\label{e:iden}
e_t+\nabla \cdot (ue)=(\nabla \cdot u)^2-\mathrm{Tr}(\nabla u)^2+\mathcal{L}_{\phi_t}(\rho) +\mathcal{L}_{\nabla \phi \cdot}(\rho u)
\end{equation}

In order to achieve our estimate we apply $\p^m$ to $\eqref{e:iden}$ and test with $\p^m e$.  Estimating the last two terms will be the main technical component of this section.  So, let us make a few quick comments as to the remaining terms.  Dropping integral signs we have for the transport term
\begin{align*}
\p^m(e \n \cdot u) \p^m e + (u \cdot \n \p^m e) \p^m e +[\p^m(u \cdot \n e) - u \cdot \n \p^m e] \p^m e.
\end{align*}
So, it can treated exactly like the similar term in the momentum in the beginning of \sect{s:u}. For $\p^m[(\nabla \cdot u)^2-\mathrm{Tr}(\nabla u)^2] \p^m e$ we have quandratic in $\n u$ expression whose $L^2$-norm breaks into the product estmate of $\|u\|_{H^{m+1} } |\n u|_\infty$. We thus can see that all these terms are bounded by $Y_m^3$. 

We now focus solely on the residual alignment term and start with the "worst" in a sense end point cases.

\bigskip
\noindent
{\sc End-Case 1.}  Here we estimate the worst term when all $m$ derivatives fall on the density to form a derivative of order $m+1$:
\begin{multline*}
I =\int_{\T^n} \left[ \int_{\Omega(0,z)}  \p^m \nabla \rho(x+\xi) \dxi \delta_{z}(\rho u)(x) -  \int_{\Omega(0,z)} \nabla (u \p^m \rho)(x+\xi) \dxi \delta_{z}\rho(x) \right] \times \\
\times  \frac{h(|z|)}{|z|^{n+\alpha-\tau}d^{\tau+n}(x,x+z)} \dz .
\end{multline*}
Integrating by parts inside  the integrals we obtain the expression
\[
 \int_{\p \Omega(0,z)} [ \p^m \rho(x+\xi) \delta_{z}(\rho u)(x) -  (u \p^m \rho)(x+\xi)  \delta_{z}\rho(x) ] \cdot \nu_\xi  \dxi .
 \]
Using that $ \delta_{z}(\rho u)(x) = \d_z \rho(x) u(x) + \rho(x+z) \d_z u(x)$, we write the integrand as
\[
\p^m \rho(x+\xi)\d_z \rho(x) (u(x) - u(x+\xi))  +   \p^m \rho(x+\xi) \d_z \rho(x) \d_z u(x) +  \p^m \rho(x+\xi)  \rho(x) \d_z u(x) .
\]

We focus on the last term which is most difficult. We write 
\[
 \p^m \rho(x+\xi)  \rho(x) \d_z u(x) =  \p^m \rho(x+\xi)  \rho(x) [\d_z u(x) -\n u(x) z] + \p^m \rho(x+\xi)  \rho(x)\n u(x) z.
 \]
We focus on the last term. Let us write the integral to be estimated
\[
J = \int_{\T^n}  \int_{\p \Omega(0,z)} [\p^m \rho(x+\xi) - \p^m\rho(x)]  \rho(x)\n u(x) z \cdot  \nu_\xi  \dxi  \frac{h(|z|)}{|z|^{n+\alpha-\tau}d^{\tau+n}(x,x+z)} \dz 
\]
Changing the variable to $\th \in \p \O(0,\be_1)$ we obtain
\[
J =  \int_{\p \Omega(0,\be_1)} \int_{\T^n}  [\p^m \rho(x+|z|U_z \th) - \p^m\rho(x)]  \rho(x)\n u(x) z \cdot  U_z \nu_\th   \frac{h(|z|)}{|z|^{\alpha-\tau+1}d^{\tau+n}(x,x+z)} \dz  \dth.
\]
Let us freeze the coefficients in the kernel:
\[
J = J_1 + J_2,
\]
where
\[
\begin{split}
J_1 & =  \int_{\p \Omega(0,\be_1)}  \rho^{-\t/n}(x) \int_{\T^n}  [\p^m \rho(x+|z|U_z \th) - \p^m\rho(x)]  \n u(x) z \cdot  U_z \nu_\th   \frac{h(|z|)}{|z|^{n+\alpha+1}}  \dz  \dth\\
J_2 & =  \int_{\p \Omega(0,\be_1)} \int_{\T^n}  [\p^m \rho(x+|z|U_z \th) - \p^m\rho(x)]  \rho(x)\n u(x) z \cdot  U_z \nu_\th   \frac{h(|z|)}{|z|^{n+\alpha+1}} \times \\
&\ \ \ \ \ \ \ \ \ \ \ \ \ \times \left( \frac{1}{\left[\fint_{\O(0,z)} \rho(x+\xi) \dxi\right]^{\t/n+1}} - \frac{1}{\rho^{\t/n+1}(x)} \right)  \dz  \dth
\end{split}
\]

To estimate $J_1$ we further symmetrize in $z$ noting that $U_{-z} = - U_z$, and so the kernel is even:
\begin{multline*}
	J_1 =  \int_{\p \Omega(0,\be_1)}  \rho^{-\t/n}(x) \int_{\T^n}  [\p^m \rho(x+|z|U_z \th) +\p^m \rho(x-|z|U_z \th) - 2 \p^m\rho(x)] \times \\
	\times  \n u(x) z \cdot  U_z \nu_\th   \frac{h(|z|)}{|z|^{n+\alpha+1}}  \dz  \dth,
\end{multline*}
and we estimate
\begin{multline*}
\| J_1\|_2 \leq \rmin^{-\t/n} |\n u|_\infty \sum_{i,j,k} \left\| \int_{\T^n}  [\p^m \rho(\cdot+|z|U_z \th)+\p^m \rho(\cdot-|z|U_z \th) - 2\p^m\rho(\cdot)]    \frac{h(|z|) z_i U_z^{j k }}{|z|^{n+\alpha+1}}  dz  \right\|_2 \\
 \leq \rmin^{-\t/n} |\n u|_\infty \| \rho \|_{\dH^{m+\a}},
\end{multline*}
where the ultimate bound follows from \lem{l:Sob2}.\\

To estimate $J_2$ we note that a similar estimate from before gives
\begin{align*}
\left| \frac{1}{\left[\fint_{\O(0,z)} \rho(x+\xi) \dxi\right]^{\t/n+1}} - \frac{1}{\rho^{\t/n+1}(x)}\right|\leq \rmax^{\frac{\tau}{n}+1}\rmin^{-3-\frac{2\t}{n}}|\nabla \rho|_{\infty}|z|
\end{align*}
Therefore by \lem{l:SobU}
\begin{align*}
|J_2|&\leq \rmax^{\frac{\tau}{n}+2}\rmin^{-3-\frac{2\t}{n}}|\nabla \rho|_{\infty}|\nabla u|_{\infty}\int_{\p\O(0,\be_1)}\int_{\mathbb{T}^n}\frac{|\p^m(x+|z|U_z\theta)-\p^m\rho(x)|}{|z|^{\frac{n}{2}+\a-\frac{1}{2}}}\frac{h(|z|)}{|z|^{\frac{n}{2}-\frac{1}{2}}}\dz \dth\\
\|J_2\|_2&\leq \rmax^{\frac{\tau}{n}+2}\rmin^{-3-\frac{2\t}{n}}|\nabla \rho|_{\infty}|\nabla u|_{\infty}\|\rho\|_{\dot{H}^{m+\a-\frac{1}{2}}}.
\end{align*}

Now to estimate the first term. The integral we need to estimate is
\begin{align*}
I&=\rho(x)\int_{\mathbb{T}^n}\int_{\p\O(0,z)}\p^m\rho(x+\xi)\cdot \nu_{\xi} \dxi \frac{[\delta_zu(x)-\nabla u(x)z]h(|z|)}{|z|^{n+\a-\t}d^{\t+n}(x,x+z)}\dz\\
&=\rho(x)\int_{\mathbb{T}^n}\int_{\p\O(0,\be_1)}|\p^m\rho(x+|z|U_z\th)-\p^m\rho(x)|\cdot U_z\nu_{\th}\dth \frac{[\delta_zu(x)-\nabla u(x)z]h(|z|)}{|z|^{1+\a-\t}d^{\t+n}(x,x+z)}\dz\\
|I|&\leq \rmax |\nabla^2 u|_{\infty}\int_{\mathbb{T}^n}\int_{\p\O(0,\be_1)}\frac{h(|z|)|\p^m\rho(x+|z|U_z\th)-\p^m\rho(x)|}{|z|^{n+\a-1}} \dth\dz
\end{align*}
So estimating in $L^2$ and applying \lem{l:SobU} again, we get,
\begin{align*}
\|I\|_2\leq \rmax|\nabla^2 u|_{\infty}\|\rho\|_{\dot{H}^{m+\a-\frac{1}{2}}}
\end{align*}

Now returning to the first integral in this section, we still need to estimate the first two terms,
\begin{align*}
I_1=\int_{\mathbb{T}^n}\int_{\p\Omega(0,z)}\p^m\rho(x+\xi)\d_z\rho(x)(u(x)-u(x+\xi))\cdot \nu_{\xi}\dxi \frac{h(|z|)}{|z|^{n+\a-\t}d^{\t+n}(x,x+z)}\dz
\end{align*}
To estimate this we add and subtract $\p^m\rho(x)u(x)$ in the integrand to get,
\begin{align*}
I_{11}&=u(x)\int_{\mathbb{T}^n}\int_{\p\Omega(0,z)}(\p^m\rho(x+\xi)-\p^m\rho(x))\cdot \nu_{\xi}\dxi \frac{h(|z|)\d_z\rho(x)}{|z|^{n+\a-\t}d^{\t+n}(x,x+z)}\dz\\
I_{12}&=-\int_{\mathbb{T}^n}\int_{\p\Omega(0,z)}(\p^m\rho(x+\xi)u(x+\xi)-\p^m\rho(x)u(x))\cdot \nu_{\xi}\dxi \frac{h(|z|)\d_z\rho(x)}{|z|^{n+\a-\t}d^{\t+n}(x,x+z)}\dz
\end{align*}
Looking at $I_{11}$ we include the next term in the Taylor finite difference.
\begin{align*}
I_{111}&=u(x)\int_{\mathbb{T}^n}\int_{\p\Omega(0,z)}(\p^m\rho(x+\xi)-\p^m\rho(x))\cdot \nu_{\xi}\dxi \frac{h(|z|)[\d_z\rho(x)-z\nabla \rho(x)]}{|z|^{n+\a-\t}d^{\t+n}(x,x+z)}\dz\\
I_{112}&=u(x)\int_{\mathbb{T}^n}\int_{\p\Omega(0,z)}(\p^m\rho(x+\xi)-\p^m\rho(x))\cdot \nu_{\xi}\dxi \frac{h(|z|)z\nabla\rho(x)}{|z|^{n+\a-\t}d^{\t+n}(x,x+z)}\dz
\end{align*}
Notice that shifting to $\p\O(0,\be_1)$ and symmetrizing makes $I_{111}$ and $I_{112}$ take the same form as $J_1$ above, so \lem{l:Sob2} gives
\begin{align*}
\|I_{11}\|_2\leq \rmin^{\t/n}|\nabla \rho|_{\infty}|u|_{\infty}\|\rho\|_{\dot{H}^{m+\a}}
\end{align*}
Proceeding the same way for $I_{12}$ we get
\begin{align*}
I_{121}&=-\int_{\mathbb{T}^n}\int_{\p\Omega(0,z)}(\p^m\rho(x+\xi)u(x+\xi)-\p^m\rho(x)u(x))\cdot \nu_{\xi}\dxi \frac{h(|z|)[\d_z\rho(x)-z\nabla \rho(x)]}{|z|^{n+\a-\t}d^{\t+n}(x,x+z)}\dz\\
I_{122}&=-\int_{\mathbb{T}^n}\int_{\p\Omega(0,z)}(\p^m\rho(x+\xi)u(x+\xi)-\p^m\rho(x)u(x))\cdot \nu_{\xi}\dxi \frac{h(|z|)z\nabla\rho(x)}{|z|^{n+\a-\t}d^{\t+n}(x,x+z)}\dz
\end{align*}
Shifting to $\p\O(0,\be_1)$, symmetrizing and using \lem{l:Sob2} with $g=u\p^m\rho $ also gives
\begin{align*}
\|I_{12}\|_2\leq \rmin^{\t/n}|\nabla \rho|_{\infty}\|u\p^m\rho \|_{\dot{H}^{\a}}\leq  \rmin^{\t/n}|\nabla \rho|_{\infty}|u|_{\infty}\|\rho\|_{\dot{H}^{m+\a}}.
\end{align*}
The second term in the first integral to estimate is
\begin{align*}
I_2=\int_{\mathbb{T}^n}\int_{\p\Omega(0,z)}\p^m\rho(x+\xi)\cdot \nu_{\xi}\dxi \frac{h(|z|)\d_z\rho(x)\d_zu(x)}{|z|^{n+\a-\t}d^{\t+n}(x,x+z)}\dz\\
\end{align*}
We pick up two powers of $z$ from $\d_z\rho(x)$ and $\d_zu(x)$ to get
\begin{align*}
|I_2|&\leq |\nabla \rho|_{\infty}|\nabla u|_{\infty}\int_{\O(0,e_1)}\int_{\mathbb{T}^n}\frac{h(|z|)|\p^{m}\rho(x+|z|U_z\th)-\p^m\rho(x)|}{|z|^{n+\a-1}}\dz\dth
\end{align*}
Applying Holder's inequality and using \lem{l:SobU} we get
\begin{align*}
\|I_2\|\leq|\nabla\rho|_{\infty}|\nabla u|_{\infty} \|\rho\|_{\dot{H}^{m+\a-\frac{1}{2}}}
\end{align*}

Now let us look at the other endpoint where all $m$ derivatives fall inside the increment $\d_zf$ in the residual terms.

\bigskip
\noindent
{\sc End-Case 2.}  Here we need to combine terms from $\mathcal{L}_{\nabla \phi\cdot}(\rho u)$ and $\mathcal{L}_{\phi_t}(\rho)$ again.

\begin{align}
I=\int_{\mathbb{T}^n}\left[\int_{\Omega(0,z)}\nabla \cdot(\rho u)(x+\xi)-\nabla\rho(x+\xi)\cdot u(x)\dxi\right] \frac{h(|z|)\delta_z \partial^m\rho(x)}{|z|^{n+\alpha-\tau}d^{\tau+n}(x,x+z)} \dz
\end{align}
Expanding $\nabla \cdot (\rho u)=\nabla\rho\cdot u+\rho(\nabla\cdot u)$ we get two terms to be estimated. We focus on the last first.
\begin{align*}
J=\int_{\mathbb{T}^n}\left[\int_{\Omega(0,z)}\rho(x+\xi)(\nabla\cdot u)(x+\xi)\dxi\right] \frac{h(|z|)\delta_z \partial^m\rho(x)}{|z|^{n+\alpha-\tau}d^{\tau+n}(x,x+z)} \dz
\end{align*}
As before we will freeze the coefficients, splitting this into $J=J_1+J_2$ with,
\begin{align*}
J_1&=\frac{\rho(x)(\nabla \cdot u)(x)}{\rho^{\frac{\tau}{n}+1}(x)}\int_{\mathbb{T}^n}\frac{h(|z|)}{|z|^{n+\a}}\d_z\p^m\rho(x)\dz\\
J_2&=\int_{\mathbb{T}^n}\frac{h(|z|)}{|z|^{n+\a}}\left(\frac{\fint_{\O(0,z)}\rho(x+\xi)(\nabla \cdot u)(x+\xi)\dxi}{\left(\fint_{\O(0,z)}\rho(x+\xi)\dxi\right)^{\frac{\tau}{n}+1}}-\frac{\rho(x)(\nabla \cdot u)(x)}{\rho^{\frac{\tau}{n}+1}(x)}\right)\d_z\p^m\rho(x)\dz
\end{align*}
The integral in $J_1$ is the truncated fractional Laplacian, so is bounded by $\rmin^{-\frac{\t}{n}+1}\rmax|\nabla u|_{\infty}\|\rho\|_{\dot{H}^{m+\a}}$. Then for $J_2$ we need to control the difference, by adding and subtracting appropriately.
\begin{align*}
J_{2,1}&=\int_{\mathbb{T}^n}\frac{h(|z|)}{|z|^{n+\a}}\fint_{\O(0,z)}\rho(x+\xi)(\nabla \cdot u)(x+\xi)\dxi\left(\frac{\rho^{\frac{\tau}{n}+1}(x)-\left(\fint_{\O(0,z)}\rho(x+\xi)\dxi\right)^{\frac{\tau}{n}+1}}{\left(\fint_{\O(0,z)}\rho(x+\xi)\dxi\right)^{\frac{\tau}{n}+1}\rho^{\frac{\tau}{n}+1}(x)}\right)\d_z\p^m\rho(x)\dz\\
&\leq \rmax^{\frac{\t}{n}+2}\rmin^{-3-\frac{2\t}{n}}|\nabla u|_{\infty}|\nabla\rho|_{\infty}\int_{\mathbb{T}^n}\frac{h(|z|)}{|z|^{n+\a-1}}|\delta_z\p^m\rho(x)|\dz
\end{align*}
for $\a<1$ estimating in $L^2$ we get a bound by $\|\rho\|_{\dot{H}^m}$ by the Minkowskii inequality, and for $\a\geq 1$ we get a bound by $\|\rho\|_{\dot{H}^{m+\a-1+\e}}$. Then looking at $J_{2,2}$ we get,
\begin{align*}
J_{2,2}&=\int_{\mathbb{T}^n}\frac{h(|z|)}{|z|^{n+\a}}\left(\frac{\fint_{\O(0,z)}\rho(x+\xi)(\nabla \cdot u)(x+\xi)\dxi-\rho(x)(\nabla \cdot u)(x)}{\rho^{\frac{\tau}{n}+1}(x)}\right)\d_z\p^m\rho(x)\dz\\
&\leq \rmin^{-1-\frac{\t}{n}}(|\nabla^2u|_{\infty}\rmax+|\nabla u|_{\infty}|\nabla \rho|_{\infty})\int_{\mathbb{T}^n}\frac{h(|z|)}{|z|^{n+\a-1}}|\delta_z\p^m\rho(x)|\dz
\end{align*}
where we can estimate the integral in the same way as for $J_{2,1}$. Now we still need to estimate the first term from expanding $\nabla \cdot (\rho u)$. The term we need to estimate is
\begin{align}
J&=\int_{\mathbb{T}^n}\left[\int_{\Omega(0,z)}\nabla\rho(x+\xi)\cdot (u(x+\xi)-u(x))\dxi\right] \frac{h(|z|)\delta_z \partial^m\rho(x)}{|z|^{n+\alpha-\tau}d^{\tau+n}(x,x+z)} \dz\\
|J|&\leq |\nabla \rho|_{\infty}|\nabla u|_{\infty}\int_{\mathbb{T}^n}\frac{h(|z|)|\d_z\p^m\rho(x)|}{|z|^{n+\a-1}}\dz\nonumber
\end{align}
which again is bounded by $\|\rho\|_{\dot{H}^m}$ for $\a<1$ and $\|\rho\|_{\dot{H}^{m+\a-1+\e}}$ for $\a\geq 1$.\\

We no longer need to combine terms from the two residual terms so we will now proceed to estimate the remainder of the terms from $\mathcal{L}_{\nabla\phi \cdot}(\rho u)$ and $\mathcal{L}_{\phi_t}(\rho)$ individually. First looking at $\mathcal{L}_{\nabla \phi\cdot}(\rho u)$ we will estimate some of the higher order terms where all $m$ derivatives hit the density, and then combine the rest of the intermediary terms in one estimate.\\

\bigskip
\noindent
{\sc End-Case 3.}  In the previous case  we used $u(x)\delta_z\p^m\rho(x)$ from $\d_z(u\p^m\rho(x))=\d_zu(x)\d_z\p^m\rho(x)+u(x)\d_z\p^m\rho(x)+\p^m\rho(x)\d_zu(x)$, we still need to estimate the other two terms.
\begin{align}
I_1&=\int_{\mathbb{T}^n}\frac{h(|z|)\delta_zu(x)\delta_z\p^m\rho(x)}{|z|^{n+\a-\t}d^{\t+n}(x,x+z)}\int_{\O(x,x+z)}\nabla \rho(x)\dxi\dz\\
|I_1|&\leq |\nabla u|_{\infty}|\nabla \rho|_{\infty}\int_{\mathbb{T}^n}\frac{h(|z|)|\d_z\p^m\rho(x)|}{|z|^{n+\a-1}}\dz\nonumber
\end{align}
Then estimating in $L^2$ the integral is bounded by $\|\rho\|_{\dot{H}^m}$ for $\a<1$ and $\|\rho\|_{\dot{H}^{m+\a-1+\e}}$ for $\a\geq 1$.\\

For the second term we need to look at separately for $\a<1$ and for $\a\geq 1$. First $\a<1$,
\begin{align}
I_2&=\int_{\mathbb{T}^n}\frac{h(|z|)(\delta_zu(x))\p^m\rho(x)}{|z|^{n+\a-\t}d^{\t+n}(x,x+z)}\int_{\O(x,x+z)}\nabla \rho(x)\dxi\dz\\
|I_2|&\leq |\nabla u|_{\infty}|\nabla \rho|_{\infty}|\p^m\rho(x)|\int_{\mathbb{T}^n}\frac{h(|z|)}{|z|^{n+\a-1}}\dz\nonumber
\end{align}
which in $L^2$ is bounded by $\|\rho\|_{\dot{H}^m}$. For $\a\geq 1$ we add and subtract the next Taylor term to get $I_2=I_{21}+I_{22}$
\begin{align*}
I_{21}&=\int_{\mathbb{T}^n}\frac{h(|z|)\p^m\rho(x)}{|z|^{n+\a-\t}d^{\t+n}(x,x+z)}\int_{\O(x,x+z)}\nabla \rho(x)\dxi\left[\delta_zu(x)-z\nabla u(x)\right]\dz\\
I_{22}&=\nabla u(x)\p^m\rho(x)\int_{\mathbb{T}^n}\frac{h(|z|)}{|z|^{n+\a-\t}d^{\t+n}(x,x+z)}\int_{\O(x,x+z)}\nabla \rho(x)\dxi z\dz
\end{align*}
For $I_{21}$ we use $|\d_zu(x)-z\nabla u(x)|\leq |\nabla^2u|_{\infty}|z|^2$ to get
\begin{align*}
|I_{21}|&\leq |\nabla^2 u|_{\infty}|\nabla \rho|_{\infty}|\p^m\rho(x)|\int_{\mathbb{T}^n}\frac{h(|z|)}{|z|^{n+\a-2}}\dz
\end{align*}
which in $L^2$ is bounded by $\|\rho\|_{\dH^m}$ again.
To estimate $I_{22}$ we symmetrize first and split into two parts,
\begin{align*}
I_{22}&=\nabla u(x)\p^m\rho(x)\int_{\mathbb{T}^n}\frac{h(|z|)}{|z|^{n+\a-\t}}\left(\frac{\int_{\O(x,x+z)}\nabla \rho(x)\dxi}{d^{\t+n}(x,x+z)}-\frac{\int_{\O(x,x-z)}\nabla \rho(x)\dxi}{d^{\t+n}(x,x-z)}\right) z\dz\\
&=\nabla u(x)\p^m\rho(x)\int_{\mathbb{T}^n}\frac{h(|z|)}{|z|^{n+\a-\t}}d^{-\t-n}(x,x+z)\left(\int_{\O(x,x+z)}\nabla \rho(x)\dxi-\int_{\O(x,x-z)}\nabla \rho(x)\dxi\right) z\dz\\
& \ \ \ \ \ \ +\nabla u(x)\p^m\rho(x)\int_{\mathbb{T}^n}\frac{h(|z|)}{|z|^{n+\a-\t}}\int_{\O(0,z)}\nabla\rho(\xi)\dxi\left(\frac{d^{\t+n}(x,x+z)-d^{\t+n}(x,x-z)}{d^{\t+n}(x,x+z)d^{\t+n}(x,x-z)}\right) z\dz\\
&=I_{221}+I_{222}
\end{align*}

Now for $I_{221}$ we notice that a similar computation as before gives,
\begin{align*}
\int_{\Omega(x,x+z)} \nabla \rho(\xi) \dxi-\int_{\Omega(x,x-z)} \nabla \rho(\xi) \dxi\lesssim (D_{s,p}\p\rho(x))^{1/p}|z|^{n+s}
\end{align*}
where $s=\a-1+\e<1$, and so $n+\a-1-s<n$,
\begin{align*}
|I_{221}|\leq |\nabla u|_{\infty}|\p^m\rho(x)|(D_{s,p}\p\rho(x))^{1/p}\int_{\mathbb{T}^n}\frac{h(|z|)}{|z|^{n+\alpha-1-s}}\dz
\end{align*}
Then using Holder's inequality in $L^2$ with $\frac{2}{p}+\frac{2}{q}=1$ we get,
\begin{align*}
\|I_{221}\|_2^2\leq |\nabla u|_{\infty}^2 \|\rho\|_{\dot{H}^{m+n(\frac{1}{2}-\frac{1}{q})}}^2\|\rho\|_{\dot{H}^{1+s+n(\frac{1}{2}-\frac{1}{p})}}^2
\end{align*}
Then choosing $q=\frac{2m}{m-1}$ and $p=2m$ gives 
\begin{align*}
&\rho : \ \ m+n(\frac{1}{2}-\frac{1}{q})=m+\frac{n}{2m}<m+\a\\
&\rho : \ \ 1+s+n(\frac{1}{2}-\frac{1}{p})=1+s+\frac{n}{2}\frac{m-1}{m}<m+\a
\end{align*}
For $I_{222}$ we have already shown how to estimate the difference $d^{\t+n}(x,x+z)-d^{\t+n}(x,x-z)$ so we get,
\begin{align*}
|I_{222}|&\leq |\nabla u|_{\infty}|\p^m \rho(x)||\nabla \rho|_{\infty}^2 \int_{\mathbb{T}^n}\frac{h(|z|)}{|z|^{n+\a-2}}\dz\\
\|I_{222}\|_2^2&\leq |\nabla u|^2_{\infty}|\nabla \rho|_{\infty}^4\|\rho\|^2_{\dot{H}^m}
\end{align*}

\bigskip
\noindent
{\sc End-Case 4.}  Since $\p^m(\rho u)=\p^{m-1}(\rho \p u)+u\p^m\rho$, we still need to estimate the term
\begin{align}
I_{0,0}[\p^{m-1}(\rho \p u)](x)=\int_{\mathbb{T}^n}\frac{h(|z|)}{|z|^{n+\a-\t}}\frac{\int_{\O(x,x+z)}\nabla \rho(\xi) \dxi}{d^{\t+n}(x,x+z)}\d_z(\p^{m-1}(\rho \p u))(x)\dz
\end{align}
For $\a<1$ and $\e$ so that $\a+\e<1$, we get
\begin{align*}
|I_{0,0}[\p^{m-1}(\rho \p u)](x)|&\leq |\nabla \rho|_{\infty}\int_{\mathbb{T}^n}\frac{h(|z|)}{|z|^{n+\a}}|\d_z(\p^{m-1}(\rho \p u))(x)| \dz\\
\|I_{0,0}[\p^{m-1}(\rho \p u)]\|_2^2&\leq |\nabla \rho|_{\infty}^2\|\rho \p u\|^2_{\dot{H}^{m-1+\a+\e}}\\
&\leq |\nabla \rho|_{\infty}^2\|\rho \|^2_{\dot{H}^{m-1+\a+\e}}\|u\|^2_{\dot{H}^{m+1}}
\end{align*}
For $\a\geq 1$, we again add and subtract the next Taylor term, focus on the second one, and symmetrize
\begin{align*}
I_{0,0,2}[\p^{m-1}(\rho \p u)](x)&=\nabla(\p^{m-1}(\rho \p u))(x)\int_{\mathbb{T}^n} \frac{h(|z|)}{|z|^{n+\a-\t}}\left(\frac{\int_{\O(x,x+z)}\nabla \rho(\xi)\dxi}{d^{\tau+n}(x,x+z)}-\frac{\int_{\O(x,x-z)}\nabla \rho(\xi)\dxi}{d^{\tau+n}(x,x-z)}\right)z\dz
\end{align*}
splitting this into two parts
\begin{align*}
I_{0,0,2,1}[\p^{m-1}(\rho \p u)](x)&=\nabla(\p^{m-1}(\rho \p u))(x)\times\\
&\times\int_{\mathbb{T}^n} \frac{h(|z|)}{|z|^{n+\a-\t}d^{\tau+n}(x,x+z)}\left(\int_{\O(x,x+z)}\nabla \rho(\xi)\dxi-\int_{\O(x,x-z)}\nabla \rho(\xi)\dxi\right)z\dz\\
I_{0,0,2,2}[\p^{m-1}(\rho \p u)](x)&=\nabla(\p^{m-1}(\rho \p u))(x)\times\\
&\times\int_{\mathbb{T}^n} \frac{h(|z|)\int_{\O(x,x-z)}\nabla \rho(\xi)\dxi}{|z|^{n+\a-\t}}\left(\frac{1}{d^{\tau+n}(x,x+z)}-\frac{1}{d^{\tau+n}(x,x-z)}\right)z\dz
\end{align*} 
and estimating these as before we get,
\begin{align*}
|I_{0,0,2,1}[\p^{m-1}(\rho \p u)](x)|&\leq |\nabla \p^{m-1}(\rho \p u)(x)|(D_{s,p}\p \rho(x))^{1/p}\int_{\mathbb{T}^n}\frac{h(|z|)}{|z|^{n+\a-1-s}}\dz\\
\|I_{0,0,2,1}[\p^{m-1}(\rho \p u)]\|_2^2&\leq \|\nabla \p^{m-1}(\rho \p u)\|_q^2 \|\p \rho\|_p^2\\
&\leq \|\rho\|^2_{\dot{H}^{m+n(\frac{1}{2}-\frac{1}{p})}}\|u\|^2_{\dot{H}^{m+1+n(\frac{1}{2}-\frac{1}{p})}}\|\rho\|^2_{\dot{H}^{1+s+n(\frac{1}{2}-\frac{1}{q})}}
\end{align*}
Choosing $q=\frac{2m}{m-1}$ and $p=2m$ we get
\begin{align*}
\|J_{2,1}\|_2^2&\leq\|\rho\|^4_{\dot{H}^{m+\a}}\|u\|_{\dot{H}^{m+1+\frac{n}{2m}}}\\
&\leq Y_m^N\|u\|_{\dot{H}^{m+1}}+\e\|u\|^2_{\dot{H}^{m+1+\frac{\a}{2}}}
\end{align*}
and for $I_{0,0,2,2}[\p^{m-1}(\rho \p u)](x)$ we get,
\begin{align*}
|I_{0,0,2,2}[\p^{m-1}(\rho \p u)](x)|&\leq |\nabla\p^{m-1}(\rho \p u)(x)||\nabla \rho|_{\infty}\int_{\mathbb{T}^n}\frac{h(|z|)}{|z|^{n+\a-2}} \dz\\
\|I_{0,0,2,2}[\p^{m-1}(\rho \p u)]\|_2^2&\leq |\nabla \rho|_{\infty}^2\|\rho\|_{\dot{H}^m}^2\|u\|_{\dot{H}^{m+1}}^2.
\end{align*}

\bigskip
\noindent
{\sc End-Case 5.}  All $m$ derivatives on $\nabla \phi$, and $l =0,...,m-1$. Have to estimate
\begin{align*}
I_{\bj,l}[\rho u](x)=\int_{\mathbb{T}^n}\frac{h(|z|)}{|z|^{n+\alpha-\tau}} \int_{\Omega(x,x+z)}\partial^l \nabla \rho(\xi) \dxi \frac{\prod_{k=1}^{m-l} \left( \int_{\Omega(x,x+z)} \partial^k \rho(\xi) d\xi\right)^{j_k}}{d^{\tau+(|\bj|+1)n}} \cdot \delta_z(\rho u)(x) \dz
\end{align*}
Using the maximal functions we get,
\begin{align*}
|I_{\bj,l}[\rho u](x)|\leq \prod_{k=1}^{m-l} (M[\partial^k \rho](x))^{j_k} M[\partial^{l+1}\rho](x)\int_{\mathbb{T}^n} \frac{h(|z|)}{|z|^{n+\alpha}}|\delta_z\rho u(x)|\dz
\end{align*}
Then using Holder's inequality with
\begin{align*}
\sum_{k=1}^{m-l} \frac{2j_k}{p_k}+\frac{2}{q_1}+\frac{2}{q_2}=1,
\end{align*}
and the Hardy-Littlewood inequality, we get for $0<\a<1,$
\begin{align*}
\|I_{\bj,l}[\rho u]\|_2^2 &\lesssim \prod_{k=1}^{m-l}\|\partial^k \rho\|_{p_k}^{2j_k}\|\partial^{l+1}\rho\|_{q_1}^2\|\rho u\|_{W^{\alpha+\e,q_2}}^2\\
&\leq \prod_{k=1}^{m-l} \|\rho\|_{\dot{H}^{k+n(\frac{1}{2}-\frac{1}{p_k})}}^{2j_k}\|\rho\|_{\dot{H}^{l+1+n(\frac{1}{2}-\frac{1}{q_1})}}^2\|\rho u\|^2_{\dot{H}^{\alpha+\varepsilon+n(\frac{1}{2}-\frac{1}{q_2})}}
\end{align*}
Now we choose, for $l\neq 0$, $p_k=\frac{2m}{k}$, $q_1=\frac{2m-1}{l}$, and $q_2=\frac{2m(2m-1)}{l}$. Then we get
\begin{align*}
\rho: \ \ \ \ k+n\left(\frac{1}{2}-\frac{1}{p_k}\right)=k+\frac{n}{2}(\frac{m-k}{m})\leq m+\alpha
\end{align*}
for $m$ large enough, for all $k=1,...,m$. Then for $l=1,...,m-1$,
\begin{align*}
&\rho: \ \ \ \ l+1+n\left(\frac{1}{2}-\frac{1}{q_1}\right)=l+1+n\left(\frac{2m-1-2l}{2(2m-1)}\right)\leq m+\alpha\\
&\rho u: \ \ \ \ \alpha+\varepsilon+n\left(\frac{1}{2}-\frac{1}{q_2}\right)=\alpha+\varepsilon+\frac{n}{2}\left(\frac{2m^2-m-l}{2m^2-m}\right)<2+\frac{n}{2}
\end{align*}
and for $l=0$, instead of using the maximal function on $\nabla \rho$ we simply estimate with $|\nabla \rho|_{\infty}$ and use $|\delta_z(\rho u)|\leq |z|(|\nabla \rho|_{\infty}|u|_{\infty}+\rmax|u|_{\infty})$ to get
\begin{align*}
|I_{\bj,0}[\rho u](x)|&\leq \prod_{k=1}^m(M[\p^k\rho](x))^{j_k}|\nabla \rho|_{\infty}(|\nabla \rho|_{\infty}|u|_{\infty}+\rmax|\nabla u|_{\infty})\\
\|I_{\bj,0}[\rho u]\|_2^2&\leq \prod_{k=1}^m\|\rho\|_{\dot{H}^k}^{2j_k}|\nabla \rho|^2_{\infty}(|\nabla \rho|_{\infty}|u|_{\infty}+\rmax|\nabla u|_{\infty})^2
\end{align*}
For $\a\geq 1$ we add and subtract the next Taylor term to get $I_{\bj,l}[\rho u]=I_{\bj,l,1}[\rho u]+I_{\bj,l,2}[\rho u]$
\begin{align*}
I_{\bj,l,1}[\rho u](x)&=\int_{\mathbb{T}^n}\frac{h(|z|)}{|z|^{n+\alpha-\tau}}  \frac{\prod_{k=1}^{m-l} \left( \int_{\Omega(x,x+z)} \partial^k \rho(\xi) \dxi\right)^{j_k}}{d^{\tau+(|\bj|+1)n}(x,x+z)}\times\\
& \ \ \ \ \ \ \ \times \left(\int_{\Omega(x,x+z)}\partial^l \nabla \rho(\xi) \dxi\right) \left[\delta_z(\rho u)(x)-z\nabla (\rho u)(x)\right] \dz\\
I_{\bj,l,2}[\rho u](x)&=\nabla (\rho u)(x)\int_{\mathbb{T}^n}\frac{h(|z|)}{|z|^{n+\alpha-\tau}} \int_{\Omega(x,x+z)}\partial^l \nabla \rho(\xi) \dxi \frac{\prod_{k=1}^{m-l} \left( \int_{\Omega(x,x+z)} \partial^k \rho(\xi) \dxi\right)^{j_k}}{d^{\tau+(|\bj|+1)n}(x,x+z)} z \dz
\end{align*}
The argument for $I_{\bj,l,1}[\rho u]$ goes just as above, noting again that the Gagliardo-Sobolevskii definition applies to smoothness exponents away from the integer values, $2>\a+\e>1$. Looking at $I_{\bj,l,2}[\rho u]$ we symmetrize and split further into three parts getting,
\begin{align*}
I_{\bj,l,2}[\rho u](x)&=\nabla (\rho u)(x)\int_{\mathbb{T}^n}\frac{h(|z|)}{|z|^{n+\alpha-\tau}}\Big[ \int_{\Omega(x,x+z)}\partial^l \nabla \rho(\xi) \dxi \frac{\prod_{k=1}^{m-l} \left( \int_{\Omega(x,x+z)} \partial^k \rho(\xi) \dxi\right)^{j_k}}{d^{\tau+(|\bj|+1)n}(x,x+z)}\\
& \ \ \ \ \ \ \ -\int_{\Omega(x,x-z)}\partial^l \nabla \rho(\xi) \dxi \frac{\prod_{k=1}^{m-l} \left( \int_{\Omega(x,x-z)} \partial^k \rho(\xi) \dxi\right)^{j_k}}{d^{\tau+(|\bj|+1)n}(x,x-z)}\Big] z \dz\\
&=\nabla (\rho u)(x)\int_{\mathbb{T}^n}\frac{h(|z|)}{|z|^{n+\alpha-\tau}} \int_{\Omega(x,x+z)}\partial^l \nabla \rho(\xi) \dxi \prod_{k=1}^{m-l} \left( \int_{\Omega(x,x+z)} \partial^k \rho(\xi) \dxi\right)^{j_k}\times\\
& \ \ \ \ \ \ \ \times \left(d^{-\tau-(|\bj|+1)n}(x,x+z)-d^{-\tau-(|\bj|+1)n}(x,x-z)\right)z\dz\\
&+\nabla (\rho u)(x)\int_{\mathbb{T}^n}\frac{h(|z|)}{|z|^{n+\alpha-\tau}} \int_{\Omega(x,x+z)}\partial^l \nabla \rho(\xi) \dxi d^{-\tau-(|\bj|+1)n}(x,x-z) \times\\
& \ \ \ \ \ \ \ \ \times\left(\prod_{k=1}^{m-l} \left( \int_{\Omega(x,x+z)} \partial^k \rho(\xi) \dxi\right)^{j_k}-\prod_{k=1}^{m-l} \left( \int_{\Omega(x,x-z)} \partial^k \rho(\xi) \dxi\right)^{j_k}\right) z\dz\\
&+\nabla (\rho u)(x)\int_{\mathbb{T}^n}\frac{h(|z|)}{|z|^{n+\alpha-\tau}} \prod_{k=1}^{m-l} \left( \int_{\Omega(x,x-z)} \partial^k \rho(\xi) \dxi\right)^{j_k} d^{-\tau-(|\bj|+1)n}(x,x-z) \times\\
& \ \ \ \ \ \ \ \ \times\left(\int_{\Omega(x,x+z)}\partial^l \nabla \rho(\xi) \dxi-\int_{\Omega(x,x-z)}\partial^l \nabla \rho(\xi) \dxi \right) z\dz\\
&=I_{\bj,l,2,1}[\rho u](x)+I_{\bj,l,2,2}[\rho u](x)+I_{\bj,l,2,3}[\rho u](x)
\end{align*}
For $I_{\bj,l,2,1}[\rho u](x)$ and $I_{\bj,l,2,2}[\rho u](x)$ we make the same estimates as before, using
\begin{align*}
|d^{\tau+(|\bj|+1)n}(x,x+z)-d^{\tau+(|\bj|+1)n}(x,x-z)|\leq |\nabla \rho|_{\infty}|z|^{\t+(\bj+1)n+1}
\end{align*}
and also applying the Maximal function to $\int_{\O(x,x+z)}\nabla \p^l\rho(\xi)\dxi \leq |z|^nM[\p^{l+1}(\rho)](x)$ to get,
\begin{align*}
|I_{\bj,l,2,1}[\rho u](x)|&\leq |\nabla (\rho u)|_{\infty}|\nabla \rho|_{\infty} M[\p^{l+1}\rho](x)\prod_{k=1}^{m-l}(M[\p^k\rho](x))^{j_k}\times\\
& \ \ \ \ \ \ \ \times\left(\int_{\mathbb{T}^n}\frac{h(|z|)}{|z|^{n+\a-2}}\dz\right)\\
\|I_{\bj,l,2,1}[\rho u]\|_2^2&\leq |\nabla (\rho u)|^2_{\infty}|\nabla \rho|^2_{\infty}\|\p^{l+1}\rho\|_q^2\prod_{k=1}^{m-l}\|\p^k\rho\|_{p_k}^{2j_k}\\
&\leq |\nabla (\rho u)|^2_{\infty}|\nabla \rho|^2_{\infty}\|\rho\|_{\dot{H}^{l+1+{n}(\frac{1}{2}-\frac{1}{q})}}^2\prod_{k=1}^{m-l}\|\rho\|_{\dot{H}^{k+n(\frac{1}{2}-\frac{1}{p_k})}}^{2j_k}
\end{align*}
where we used Holder's inequality with
\begin{align*}
\sum_{k=1}^{m-l}\frac{2j_k}{p_k}+\frac{2}{q}=1
\end{align*}
Picking $q=\frac{2m}{l}$ and $p_k=\frac{2m}{k}$ gives
\begin{align*}
&\rho : \ \ \ l+1+\frac{n(m-l)}{2m}\leq m+\a\\
&\rho : \ \ \ k+\frac{n(m-k)}{2m}\leq m+\a
\end{align*}
Then for $I_{\bj,l,2,2}[\rho u](x)$ we get
\begin{align*}
|I_{\bj,l,2,2}[\rho u](x)|&\leq |\nabla (\rho u)|_{\infty}M[\p^{l+1}\rho](x)\sum_{k=1}^{m-l}\prod_{\substack{i=1 \\ i\neq k}}^{m-l} (M[\p^i\rho](x))^{j_i}M([\p^k \rho](x))^{j_k-1}(D_{s,p_k}\p^k\rho(x))^{1/p_k}\times\\
& \ \ \ \ \ \ \ \ \ \ \ \ \ \ \times\left(\int_{\mathbb{T}^n}\frac{h(|z|)}{|z|^{n+\a-1-s}}\dz\right)\\
\|I_{\bj,l,2,2}[\rho u]\|_2^2&\leq |\nabla (\rho u)|_{\infty}^2\|\p^{l+1}\rho\|_{q}^2\prod_{\substack{i=1 \\ i \neq k}}^{m-l}\|\p^i\rho\|_{p_i}^{2j_i}\|\p^k\rho\|_{p_k}^{2(j_k-1)}\|\p^k\rho\|_{W^{s,p_k}}^2\\
&\leq  |\nabla (\rho u)|_{\infty}^2\|\p^{l+1}\rho\|_{\dot{H}^{l+1+n(\frac{1}{2}-\frac{1}{q})}}^2\prod_{\substack{i=1 \\ i \neq k}}^{m-l}\|\rho\|_{\dot{H}^{i+n(\frac{1}{2}-\frac{1}{p_i})}}^{2j_i}\|\rho\|_{\dot{H}^{k+n(\frac{1}{2}-\frac{1}{p_k})}}^{2(j_k-1)}\|\rho\|_{\dot{H}^{k+s+n(\frac{1}{2}-\frac{1}{p_k})}}^2
\end{align*}
picking the same Holder conjugates gives
\begin{align*}
&\rho : \ \ \ l+1+\frac{n(m-l)}{2m}\leq m+\a\\
&\rho : \ \ \ k+\frac{n(m-k)}{2m}\leq m+\a\\
&\rho : \ \ \ k+s+\frac{n(m-k)}{2m}\leq m+\a
\end{align*}
To estimate $I_{\bj,l,2,3}[\rho u](x)$ we note that a similar computation as before gives
\begin{align*}
\int_{\Omega(x,x+z)}\partial^l \nabla \rho(\xi) \dxi-\int_{\Omega(x,x-z)}\partial^l \nabla \rho(\xi) \dxi\leq |z|^{n+s}(D_{s,q}\p^{l+1}\rho(x))^{1/q}
\end{align*}
therefore, again using the Maximal function we get,
\begin{align*}
|I_{\bj,l,2,3}[\rho u](x)|&\leq |\nabla (\rho u)|_{\infty}(D_{s,q}\p^{l+1}\rho(x))^{1/q}\prod_{k=1}^{m-l}(M[\p^k\rho](x))^{j_k}\int_{\mathbb{T}^n}\frac{h(|z|)}{|z|^{n+\a-1-s}}\dz\\
\|I_{\bj,l,2,3}[\rho u]\|_2^2&\leq |\nabla (\rho u)|_{\infty}^2\|\rho\|^2_{\dot{H}^{l+1+s+n(\frac{1}{2}-\frac{1}{q})}}\prod_{k=1}^{m-l}\|\rho\|^{2j_k}_{\dot{H}^{k+n(\frac{1}{2}-\frac{1}{p_k})}}
\end{align*}
choosing the same Holder conjugates gives,
\begin{align*}
&\rho : \ \ \ l+1+s+\frac{n(m-l)}{2m}\leq m+\a\\
&\rho : \ \ \ k+\frac{n(m-k)}{2m}\leq m+\a
\end{align*}

\bigskip
\noindent
{\sc Intermediary  Cases.}  For all $l=1,...,m-1, i=0,...,m-l,$ and $k=1,...,m-l-i$, we have to estimate

\begin{align*}
I_{\bj,i}[\p^l(\rho u)](x)=\int_{\mathbb{T}^n} \frac{h(|z|)}{|z|^{n+\alpha-\tau}}\int_{\Omega(x,x+z)}\partial^i\nabla \rho(\xi)\dxi \frac{\prod_{k=1}^{m-l-i}\left( \int_{\Omega(x,x+z)}\partial^k \rho(\xi)\dxi\right)^{j_k}}{d^{\tau+(|j|+1)n}(x,x+z)}\cdot \delta_z\partial^l(\rho u)(x)\dz
\end{align*}
First, for $0<\a<1$, we employ the Maximal functions again to get,
\begin{align*}
|I_{\bj,i}[\p^l(\rho u)](x)|\lesssim \prod_{k=1}^{m-l-i}\left(M[\partial^k\rho](x)\right)^{j_k} M[\partial^{i+1}\rho](x)\int_{\mathbb{T}^n}\frac{h(|z|)}{|z|^{n+\alpha}}|\delta_z\partial^l (\rho u)(x)|\dz
\end{align*}
Then estimating in $L^2$-norm, applying Holder's inequality with
\begin{align*}
\sum_{k=1}^{m-l-i}\frac{2j_k}{p_k}+\frac{2}{q_1}+\frac{2}{q_2}=1,
\end{align*}
and using the Hardy-Littlewood inequality, we get
\begin{align*}
\|I_{\bj,i}[\p^l(\rho u)]\|_2^2\lesssim \prod_{k=1}^{m-l-i}\|\rho\|_{\dot{H}^{k+n(\frac{1}{2}-\frac{1}{p_k})}}^{2j_k}\|\rho\|^2_{\dot{H}^{i+1+n(\frac{1}{2}-\frac{1}{q_1})}}\|\rho u\|_{\dot{H}^{l+\alpha+\varepsilon+n(\frac{1}{2}-\frac{1}{q_2})}}^2
\end{align*}
Now we choose $p_k=\frac{2m}{k}, q_1=\frac{2m}{i}$, and $q_2=\frac{2m}{l}$. Provided $m$ is large enough and $\varepsilon$ is small enough,

\begin{align*}
\rho&: \ \ \ k+\frac{n}{2}\left(\frac{m-k}{m}\right)\leq m-1+\alpha\\
\rho&: \ \ \ i+1+\frac{n}{2}\left(\frac{m-i}{m}\right)\leq m+\alpha\\
\rho u&: \ \ \ l+\alpha+\varepsilon+\frac{n}{2}\left(\frac{m-l}{m}\right)\leq m+\alpha
\end{align*}
for all $l=1,...,m-1, i=0,...,m-l,$ and $k=1,...,m-l-j$.\\

As before, to extend the argument to include $\a\geq 1$, we must include the next term in the Taylor finite difference
\begin{align*}
I_{\bj,i}[\p^l(\rho u)](x)&=\int_{\mathbb{T}^n}\frac{h(|z|)}{|z|^{n+\alpha-\tau}} \frac{\prod_{k=1}^{m-l-i} \left( \int_{\Omega(x,x+z)} \partial^k \rho(\xi) \dxi\right)^{j_k}}{d^{\tau+(|\bj|+1)n}}\times\\
& \ \ \ \ \ \ \ \ \times\left(\int_{\Omega(x,x+z)}\partial^i \nabla \rho(\xi) \dxi \right)\cdot [\delta_z\p^l(\rho u)(x)-z\nabla \p^l\rho u(x)] \dz\\
&+\int_{\mathbb{T}^n}\frac{h(|z|)}{|z|^{n+\alpha-\tau}} \int_{\Omega(x,x+z)}\partial^i \nabla \rho(\xi) \dxi \frac{\prod_{k=1}^{m-l-i} \left( \int_{\Omega(x,x+z)} \partial^k \rho(\xi) \dxi\right)^{j_k}}{d^{\tau+(|\bj|+1)n}} \cdot z\nabla \p^l (\rho u)(x) \dz\\
&:=I_{\bj,i,1}[\p^l(\rho u)](x)+I_{\bj,i,2}[\p^l(\rho u)](x)
\end{align*}
Again, the estimate on $I_{\bj,i,1}[\p^l(\rho u)]$ goes as before, and for $I_{\bj,i,2}[\p^l(\rho u)](x)$ we symmetrize
\begin{align*}
I_{\bj,i,2}[\p^l(\rho u)](x)&=\int_{\mathbb{T}^n}\frac{h(|z|)}{|z|^{n+\alpha-\tau}}\Big[ \int_{\Omega(x,x+z)}\partial^i \nabla \rho(\xi) \dxi \frac{\prod_{k=1}^{m-l-i} \left( \int_{\Omega(x,x+z)} \partial^k \rho(\xi) \dxi\right)^{j_k}}{d^{\tau+(|\bj|+1)n}(x,x+z)}\\
& \ \ \ \ \ \ \ -\int_{\Omega(x,x-z)}\partial^i \nabla \rho(\xi) \dxi \frac{\prod_{k=1}^{m-l-i} \left( \int_{\Omega(x,x-z)} \partial^k \rho(\xi) \dxi\right)^{j_k}}{d^{\tau+(|\bj|+1)n}(x,x-z)}\Big] \cdot z\nabla \p^l(\rho u(x)) \dz\\
&=\nabla\p^l (\rho u)(x)\int_{\mathbb{T}^n}\frac{h(|z|)}{|z|^{n+\alpha-\tau}} \int_{\Omega(x,x+z)}\partial^i \nabla \rho(\xi) \dxi \prod_{k=1}^{m-l-i} \left( \int_{\Omega(x,x+z)} \partial^k \rho(\xi) \dxi\right)^{j_k}\times\\
& \ \ \ \ \ \ \ \times \left(d^{-\tau-(|\bj|+1)n}(x,x+z)-d^{-\tau-(|\bj|+1)n}(x,x-z)\right)z\dz\\
&+\nabla \p^l(\rho u)(x)\int_{\mathbb{T}^n}\frac{h(|z|)}{|z|^{n+\alpha-\tau}} \int_{\Omega(x,x+z)}\partial^i \nabla \rho(\xi) \dxi d^{-\tau-(|\bj|+1)n}(x,x+z) \times\\
& \ \ \ \ \ \ \ \ \times\left(\prod_{k=1}^{m-l-i} \left( \int_{\Omega(x,x+z)} \partial^k \rho(\xi) \dxi\right)^{j_k}-\prod_{k=1}^{m-l-i} \left( \int_{\Omega(x,x-z)} \partial^k \rho(\xi) \dxi\right)^{j_k}\right) z\dz\\
&+\nabla \p^l(\rho u)(x)\int_{\mathbb{T}^n}\frac{h(|z|)}{|z|^{n+\alpha-\tau}} \prod_{k=1}^{m-l-i} \left( \int_{\Omega(x,x+z)} \partial^k \rho(\xi) \dxi\right)^{j_k} d^{-\tau-(|\bj|+1)n}(x,x+z) \times\\
& \ \ \ \ \ \ \ \ \times\left(\int_{\Omega(x,x+z)}\partial^i \nabla \rho(\xi) \dxi-\int_{\Omega(x,x-z)}\partial^i \nabla \rho(\xi) \dxi \right) z\dz\\
&=I_{\bj,i,2,1}[\p^l(\rho u)](x)+I_{\bj,i,2,3}[\p^l(\rho u)](x)+I_{\bj,i,2,3}[\p^l(\rho u)](x)
\end{align*}
For $I_{\bj,i,2,1}[\p^l(\rho u)](x)$ and $I_{\bj,i,2,2}[\p^l(\rho u)](x)$ we apply the same estimates as above to get
\begin{align*}
|I_{\bj,i,2,1}[\p^l(\rho u)](x)|\leq|\nabla \p^l (\rho u)(x)|\prod_{k=1}^{m-l-i}(M[\p^k\rho](x))^{j_k}M[\p^{i+1}\rho](x)\int_{\mathbb{T}^n}h(|z|)\frac{\dz}{|z|^{n+\a-2}}
\end{align*}
Since $\a<2$, the integral converges, and
\begin{align*}
\|I_{\bj,i,2,1}[\p^l(\rho u)]\|_2^2&\lesssim \|\p^{l+1}(\rho u)\|_{q_2}^2\|\p^{i+1} \rho\|_{q_1}^2\prod_{k=1}^{m-l-i}\|\p^k\rho\|_{p_k}^{2j_k}\\
&\leq  \|\rho u\|_{\dot{H}^{l+1+n(\frac{1}{2}-\frac{1}{q_2})}}^2\| \rho\|_{\dot{H}^{i+1+n(\frac{1}{2}-\frac{1}{q_1})}}^2\prod_{k=1}^{m-l-i}\|\rho\|_{\dot{H}^{k+n(\frac{1}{2}-\frac{1}{p_k})}}^{2j_k}
\end{align*}
Choosing the Holder conjugates as before blends this into the previous case. For $I_{\bj,i,2,2}[\p^l(\rho u)](x)$ we have,
\begin{align*}
|I_{\bj,i,2,2}[\p^l(\rho u)](x)|&\leq |\nabla \p^l (\rho u)(x)|M[\p^{i+1}\rho](x)\times\\
& \times\sum_{k=1}^{m-l-i}\prod_{\substack{\lambda=1\\ i\neq k}}^{m-l-i} (M[\p^{\lambda}\rho](x))^{j_{\lambda}}(M[\p^k\rho](x))^{j_k-1}(D_{s,p_k}\p^k\rho(x))^{1/p_k}\int_{\mathbb{T}^n}\frac{h(|z|)}{|z|^{n+\a-1-s}}\dz
\end{align*}
since $\a<1+s<2$, the integral converges, so for any $k=1,...,m-l-i$,
\begin{align*}
&\|I_{\bj,i,2,2}[\p^l(\rho u)]\|_2^2\leq \|\p^{l+1}(\rho u)\|_{q_2}^2\|\p^{i+1}\rho\|_{q_1}^2\prod_{\substack{\lambda=1\\ i\neq k}}^{m-l-i}\|\p^{\lambda}\rho\|_{p_{\lambda}}^{2j_{\lambda}}\|\p^k\rho\|_{p_k}^{2(j_k-1)}\|\p^k\rho\|_{W^{s,p_k}}^2\\
& \ \ \ \ \ \ \ \leq \|(\rho u)\|_{\dot{H}^{l+1+n(\frac{1}{2}-\frac{1}{q_2})}}^2\|\rho\|_{\dot{H}^{i+1+n(\frac{1}{2}-\frac{1}{q_1})}}^2\prod_{\substack{\lambda=1\\ i\neq k}}^{m-l-i}\|\rho\|_{\dot{H}^{\lambda+n(\frac{1}{2}-\frac{1}{p_{\lambda}})}}^{2j_{\lambda}}\|\rho\|_{\dot{H}^{k+n(\frac{1}{2}-\frac{1}{p_k})}}^{2(j_k-1)}\|\rho\|_{\dot{H}^{k+s+n(\frac{1}{2}-\frac{1}{p_k})}}^2
\end{align*}
again choosing the same Holder conjugates as before gives the necessary bound. Now for $I_{\bj,i,2,3}[\p^l(\rho u)]$ we get,
\begin{align*}
\|I_{\bj,i,2,3}[\p^l(\rho u)]\|_2^2&\lesssim\|(\rho u)\|_{\dot{H}^{l+1+n(\frac{1}{2}-\frac{1}{q_2})}}^2\|\rho\|_{\dot{H}^{i+1+s+n(\frac{1}{2}-\frac{1}{q_1})}}^2\prod_{k=1}^{m-l-i}\|\rho\|_{\dot{H}^{k+n(\frac{1}{2}-\frac{1}{p_k})}}^{2j_k}
\end{align*}
Choosing the same Holder conjugates again gives the desired bound.
Therefore we have the necessary bounds for every term in $\p^m \mathcal{L}_{\nabla \phi \cdot}(\rho u)$.\\
 
Now let us examine $\mathcal{L}_{\phi_t}(\rho)$. Notice that any term in $\p^m\mathcal{L}_{\phi_t}(\rho)$ takes the form
\begin{align*}
I=\int_{\mathbb{T}^n}\frac{h(|z|)}{|z|^{n+\a-\t}}\int_{\O(x,x+z)}\p^i\nabla \cdot(\rho u)(\xi) \dxi\frac{\prod_{k=1}^{m-l-i}\left(\int_{\O(x,x+z)}\p^k\rho(\xi)\dxi\right)^{j_k}}{d^{\t+(|\bj|+1)n}(x,x+z)}\d_z\p^l\rho(x)\dz
\end{align*}
The cases where $l=1,...,m-1$ are estimated exactly the same as the Intermediary case for $\mathcal{L}_{\nabla \phi \cdot}(\rho u)$ above by switching the roles of $\rho u$ and $\rho$ in the increment $\d_z$ and in the first integral that contains the gradient.

Similarly the case where $l=0$ and $i=0,...,m-1$ is taken care of by End Case 5. Further, we have already used the case where $l=m$ during the estimates in End Case 2, and part of the term $l=0, i=m$ in End Case 1. Since $\nabla \p^m(\rho u)=\nabla(u\p^m\rho)+\nabla\p^{m-1}(\rho \p u)$ we still have to estimate the term
\begin{align*}
J=\int_{\mathbb{T}^n}\frac{h(|z|)}{|z|^{n+\a-\t}}\frac{\int_{\O(x,x+z)}\nabla \p^{m-1}(\rho \p u)\dxi}{d^{\t+n}(x,x+z)}\d_z\rho(x)\dz
\end{align*}
For $\a<1$ we use $|\d_z\rho(x)|\leq |\nabla \rho|_{\infty}|z|$ and the maximal function to get
\begin{align*}
|J|&\leq |\nabla \rho|_{\infty}M[\p^m(\rho \p u)](x)\int_{\mathbb{T}^n}\frac{h(|z|)}{|z|^{n+\a-1}}\dz\\
\|J\|_2^2&\leq |\nabla \rho|_{\infty}^2\|\rho\|_{\dot{H}^m}^2\|u\|_{\dH^{m+1}}^2
\end{align*}
For $1\leq \a<2$ we utilize the next Taylor term again and estimate the second of these by symmetrizing and splitting into two parts to get,
\begin{align*}
J_2&=\nabla\rho(x)\int_{\mathbb{T}^n}\frac{h(|z|)}{|z|^{n+\a-\t}}\frac{\int_{\O(x,x+z)}\nabla \p^{m-1}(\rho \p u)\dxi}{d^{\t+n}(x,x+z)}z\dz\\
J_{21}&=\nabla\rho(x)\int_{\mathbb{T}^n}\frac{h(|z|)}{|z|^{n+\a-\t}}\frac{\int_{\O(x,x+z)}\nabla \p^{m-1}(\rho \p u)\dxi-\int_{\O(x,x-z)}\nabla \p^{m-1}(\rho \p u)\dxi}{d^{\t+n}(x,x+z)}z\dz\\
J_{22}&=\nabla\rho(x)\int_{\mathbb{T}^n}\frac{h(|z|)}{|z|^{n+\a-\t}}\int_{\O(x,x-z)}\nabla \p^{m-1}(\rho \p u)\dxi\left(d^{-\t-n}(x,x+z)-d^{-\t-n}(x,x-z)\right)z\dz
\end{align*}
Estimating $J_{21}$ gives
\begin{align*}
|J_{21}|&\leq |\nabla \rho|_{\infty}(D_{s,2}(\p^m(\rho \p u)))^{1/2}\int_{\mathbb{T}^n}\frac{h(|z|)}{|z|^{n+\a-1-s}}\dz\\
\|J_{21}\|_2^2&\leq |\nabla \rho|_{\infty}^2\|\rho\|_{\dH^{m+\a}}^2\|u\|_{\dH^{m+1+s}}^2\\
&\leq Y_m^N\|u\|_{\dH^{m+1}}^2+\e\|u\|_{\dH^{m+1+\frac{\a}{2}}}
\end{align*}
where we used Interpolation and Young's inequality to get the last inequality. Since $1\leq \a<2$ it is possible to find an $s$ such that $s\leq \a/2<1$ for interpolation and $1+s>\a$ to make the above integral finite.

For $J_{22}$ we use the differences in $d^{-\t-n}$ to get 
\begin{align*}
|J_{22}|&\leq |\nabla \rho|_{\infty}^2 M[\p^m(\rho \p u)]\int_{\mathbb{T}^n}\frac{h(|z|)}{|z|^{n+\a-2}}\dz\\
\|J_{22}\|_2^2&\leq |\nabla\rho|_{\infty}^2\|\rho\|_{\dot{H}^m}^2\|u\|_{\dH^{m+1}}^2
\end{align*}

This covers all the terms in $\p^m\mathcal{L}_{\phi_t}(\rho)$. Recalling that the goal is to bound everything by the grand quantity $Y_m^N$, we have shown that 
\begin{align*}
\left\|\p^m(\mathcal{L}_{\phi_t}(\rho)+\mathcal{L}_{\nabla \phi \cdot}(\rho u))\p^me\right\|_2\leq Y_m^N.
\end{align*}
Combined with the transport terms we have estimated in the beginning we therefore have proved the desired a priori bound
\begin{align*}
\ddt\|e\|_{\dH^m}^2\leq CY_m^N.
\end{align*}

\section{Viscous regularization and local existence}\label{s:visc}

To actually produce local solutions we consider viscous regularization of the system
\begin{equation}\label{e:visc}
\begin{split}
\rho_t + \n \cdot (u \rho) & = \e \D \rho \\
u_t + u \cdot \n u & = \cC_\phi(u,\rho) + \e \D u, 
\end{split}
\end{equation}
First, we show that this regularization is sufficient to obtain local solutions via the standard fixed point argument. Second, we show that such regularization does not interfere with the a priori estimates we have obtained in the previous sections.  

To prove local estimates of smooth solutions to \eqref{e:visc} we consider the mild formulation
\begin{equation}
	\begin{split}
	\rho(t) &= e^{\e t \D} \rho_0 - \int_0^t e^{\e(t-s) \D} \n \cdot (u \rho)(s) \ds \\
	u(t) &= e^{\e t \D} u_0 - \int_0^t e^{\e(t-s) \D} u \cdot \n u (s) \ds + \int_0^t e^{\e(t-s) \D}  \cC_\phi(u,\rho)(s) \ds.
	\end{split}
\end{equation}
Let us denote by $Z = (\rho,u)$ the state variable of our system and by $T[Z](t)$ the right hand side of the mild formulation. In order to apply the stadard fixed point argument we have to show that $T$ leaves the set $C([0,T_{\d,\e}); B_\d(Z_0))$ invariant, where $B_\d(Z_0)$ is the ball of radius $\e$ around initial condition $Z_0$, and that it is a contraction. We limit ourselves to showing details for invariance as the estimates involved there are identical to those required to also prove Lipschitzness. 

First we assume that $\rho$ has no vacuum: $\rho_0(x) \geq  c_0>0$. The metric we are using  the same as before $\rho\in \dH^{m+\a} \cap L^1$, $u\in H^{m+1}$.  Note that if $\d>0$ is small enough then for any $\| \rho - \rho_0\|_{\dH^{m+\a}} < \d$ which has the same mass $\int \rho = \int \rho_0$, one obtains 
\begin{equation}
	\rho(x) > \frac12 c_0.
\end{equation}
So, let us assume that $Z \in C([0,T_\d); B_\d(Z_0))$. It is clear that $\|e^{\e t\D} Z_0 - Z_0\|< \frac{\d}{2}$ provided time $t$ is short enough. The $Z$ has some bound $\|Z\| \leq C$.  Using that let us estimate the norms under the integrals. First, recall that $ \| \L_\a e^{\e t\D}\|_{L^2 \to L^2} \lesssim \frac{1}{t^{\a/2}}$. In the case $\a \geq 1$, we have
\begin{multline*}
\left\| \p^m \L_\a \int_0^t e^{\e(t-s) \D} \n \cdot (u \rho)(s) \ds \right\|_2 \leq \int_0^t \frac{1}{(t-s)^{\a/2}} \| \p^{m+1} (u\rho)(s) \|_2 \ds \\
\leq \int_0^t \frac{1}{(t-s)^{\a/2}}  \|u\|_{\dH^{m+1}} \|\rho\|_{\dH^{m+\a}} \ds \leq C^2 t^{1-\a/2} < \frac{\d}{2},
\end{multline*}
provided $T_\d$ is small enough. In the case $\a<1$, we combine instead one full derivatives with the heat semigroup, and the rest $\p^{m+\a}$ gets applied to $u\rho$, which produces a similar bound. 

Moving on to the $u$-equation, we have
\begin{multline*}
	\left\| \p^{m+1} \int_0^t e^{\e(t-s) \D}  u \cdot \n u (s)  \ds \right\|_2 \leq \int_0^t \frac{1}{(t-s)^{1/2}} \| \p^{m} (u \cdot \n u)(s) \|_2 \ds \\
	\leq \int_0^t \frac{1}{(t-s)^{\a/2}}  \|u\|_{\dH^{m+1}} \|u\|_{\dH^{m}}\ds \leq C^2 t^{1/2} < \frac{\d}{2}.
\end{multline*}
As to the commutator form, for $\a\leq 1$ the computation is very similar: we combine one derivative with the heat semigroup and for the rest we use \eqref{e:Lcommmain3}:
\[
\| \p^m \cC_\phi(u,\rho)\|_2 \leq  \|u\|_{m+\a}^N \|\rho\|_{m+\a}^N  < C^{2N},
\]
and the rest follows as before.  When $\a > 1$ we need to use the refined estimate \eqref{e:Lcommmain4}. Namely, it follows from the first in \eqref{e:Lcommmain4} by keeping the highest norms only,
\[
\begin{split}
\|\mathcal{L}_{\phi}  f \|_{\dot{H}^m} &  \lesssim  c_\e\|\rho\|_{\dot{H}^{m-1+\a+\e}}^N\|f\|_{\dot{H}^{m+\alpha}} \\
\|\mathcal{L}_{\phi}  f \|_{\dot{H}^{m-1}} &  \lesssim  c_\e\|\rho\|_{\dot{H}^{m-2+\a+\e}}^N\|f\|_{\dot{H}^{m-1+\alpha}} 
\end{split}
\]
Therefore, by interpolation, we have an estimate in the fractional space $\dH^{m-1+s}$ for $0<s<1$:
\begin{equation}
\|\mathcal{L}_{\phi}  f \|_{\dot{H}^{m-1+s}}  \lesssim  \|\rho\|_{\dot{H}^{m-1+\a+\e}}^N\|f\|_{\dot{H}^{m-1+\alpha + s}} 
\end{equation}
Taking $s = 2-\a$ yields
\begin{equation}
\|\mathcal{L}_{\phi}  f \|_{\dot{H}^{m+1-\a}}  \lesssim  \|\rho\|_{\dot{H}^{m-1+\a+\e}}^N\|f\|_{\dot{H}^{m+1}}. 
\end{equation}
Combining $\a$ derivatives with the heat, and using the inequality above with $\e = 1$, we obtain
\begin{multline*}
\left\|  \int_0^t \L^{\a} e^{\e(t-s) \D} \L^{m+1-\a} \cC_\phi(u,\rho) u (s)  \ds \right\|_2 \leq \int_0^t \frac{1}{(t-s)^{\a/2}} [\| \cL_\phi(u \rho)\|_{\dot{H}^{m+1-\a}}  +  \| u \cL_\phi(\rho)\|_{\dot{H}^{m+1-\a}} ] \ds \\
\leq \int_0^t \frac{1}{(t-s)^{\a/2}}  \|\rho\|_{\dot{H}^{m+\a}}^N \|u\|_{\dH^{m+1}}  \ds \leq C^2 t^{1-\a/2} < \frac{\d}{2}.
\end{multline*}
We have proved that $\|T[Z](t) - Z_0 \| <\d$, and the proof is complete. 

The obtained interval of existence of course depends on $\e$ as it enters into all the estimates of the integrals. In order to conclude the local existence argument we still have to show that our a priori bound
\begin{equation}\label{e:YN}
\ddt Y_m \lesssim Y_m^N
\end{equation}
is independent of $\e$. This would allow us to extend $T_{\e,\d}$ to a time dependent on the initial condition only. Then the classical compactness argument would apply to pass to the limit as $\e \to 0$ in the same state space $C([0,T); (\dH^{m+\a} \cap L^1) \times H^{m+1})$.

It is clear that the $u$-equation will not see the effect of viscous regularization because the term produced by the energy method is $-\e \| \p^{m+2} u \|_2^2$. The $e$-equation, however, will produce several extra terms:
\begin{equation}\label{e:evisc}
e_t+\nabla \cdot (ue)=(\nabla \cdot u)^2-\mathrm{Tr}(\nabla u)^2+\mathcal{L}_{\phi_t}(\rho) +\mathcal{L}_{\nabla \phi \cdot}(\rho u) - 2 \e \cL_{\n \phi}\n \rho- \e \cL_{\D \phi} \rho + \e \D e.
\end{equation}
After the test, the extra terms become
\begin{multline}
	- \e \|e\|_{\dH^{m+1}}^2 -2  \e \lan \p^{m-1} \cL_{\n \phi}\n \rho, \p^{m+1} e \ran - \e \lan \p^{m-1} \cL_{\D \phi} \rho , \p^{m+1} e \ran \\
	\leq - \frac12 \e \|e\|_{\dH^{m+1}}^2 + 8 \e \|\p^{m-1} \cL_{\n \phi}\n \rho\|_2^2 + 4 \e \|\p^{m-1} \cL_{\D \phi} \rho \|_2^2.
\end{multline}
Let us observe that  the residual two terms present special parts of the expansion of the commutator we have estimated in \lem{l:maincomm} for $m \to m+1$. So, from \eqref{e:maincomm} we obtain
\begin{multline*}
\|\p^{m-1} \cL_{\n \phi}\n \rho\|_2^2 + \|\p^{m-1} \cL_{\D \phi} \rho \|_2^2  \lesssim  \|  \rho \|_{\dot{H}^{m+\a}}^N ( \|  \rho \|_{\dot{H}^{m + \frac12 +\a}}^2+ \|  \rho \|_{\dot{H}^{m +1+ \frac{\a}{2}}}^2)+\\
+ ( \|  \rho \|_{\dot{H}^{m+1}}^2+   \|\rho\|_{\dot{H}^{m+\frac12+\a}}^2) \|  \rho \|_{\dot{H}^{2+ \frac{n}{2}}}^2 . 
\end{multline*}
Let us recall that we have another $\e$-gain term from viscous regularization:
\[
-\e \| \p^{m+2} u \|_{\dH^{m+1}}^2  -  \frac12 \e \|e\|_{\dH^{m+1}}^2  \lesssim - \e \rmin^{-2\t/n} \|  \rho \|_{\dot{H}^{m+1+\a}}^2  + \e Y_m^N.
\]
So, by a computation similar to \eqref{e:between} the residual term can be estimated by
\[
\e \|\p^{m-1} \cL_{\n \phi}\n \rho\|_2^2 + \e \|\p^{m-1} \cL_{\D \phi} \rho \|_2^2  \lesssim  \frac12 \e \rmin^{-2\t/n} \|  \rho \|_{\dot{H}^{m+1+\a}}^2  + \e Y_m^N.
\]
So, the total influence of the viscous term on a priori estimates will be an additional $\e Y_m^N$ added to \eqref{e:YN} which has no effect.

Having obtained uniformly bounded solutions $(u^\e,\rho^\e) \in C([0,T); H^{m+1} \times H^{m+\a})$ on a common time interval we pass to the $w^*$-limit in the top space and strong limit in any lower regularity space  $H^{m+1-\d} \times H^{m+\a-\d}$, which guarantees that the limit will actually be weakly continuous in the top space. 

This concludes the proof of local existence.

\section{Appendix: variants of intrinsic definitions of a Sobolev space.}

At another place we considered the quantity
\[
D_s(g) = \int_{\mathbb{T}^{2n}} \frac{h(z)  \left|g \left(x+|z| U_z \th   \right)- g (x) \right|^2 }{|z|^{n+s} }  \dz \dx ,
\]
where $\th \in \p \Omega(0,\be_1)$. It is not essential where exactly $\th$ is localted as long as it is uniformly bounded. 
\begin{lemma}\label{l:SobU}
\[
D_s(g) \leq \| g\|_{\dH^{s/2}}.
\]
\end{lemma}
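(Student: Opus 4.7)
The quantity $D_s(g)$ differs from the standard Gagliardo seminorm of $\dH^{s/2}(\T^n)$ only in that the increment variable has been twisted from $z$ to $|z|U_z\theta$. Since $U_z$ is orthogonal and $\theta$ is confined to the bounded set $\p\O(0,\be_1)\subset B_1(0)$, this twist preserves the length scale $|z|$ up to a bounded factor. The natural way to exploit this is to pass to the Fourier side in $x$, where the geometric dependence of the shift on $z$ collapses into a scalar phase whose modulus is controlled purely by the product $|\xi|\,|z|$.

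Concretely, my first step is to apply Plancherel in $x$, yielding
\[
D_s(g) = \sum_{\xi\in\Z^n} |\hat g(\xi)|^2 \, m(\xi), \qquad m(\xi) := \int_{\T^n} \frac{h(|z|)}{|z|^{n+s}}\bigl|e^{2\pi i \xi\cdot|z|U_z\theta}-1\bigr|^2 \dz.
\]
The second step is to bound the symbol $m(\xi)$. Using the elementary inequality $|e^{it}-1|^2 \leq 4\min(1,t^2)$ together with $|\xi\cdot|z|U_z\theta|\leq |\xi|\,|z|\,|\theta|\lesssim |\xi|\,|z|$ (orthogonality of $U_z$ plus boundedness of $\theta$), I obtain the pointwise phase bound
\[
\bigl|e^{2\pi i \xi\cdot|z|U_z\theta}-1\bigr|^2 \lesssim \min(1,|\xi|^2|z|^2).
\]
Passing to polar coordinates and splitting the radial integral at the natural scale $r\sim 1/|\xi|$ then gives $m(\xi)\lesssim |\xi|^s$ for $|\xi|\geq 1$ (with $m(0)=0$), and summing produces
\[
D_s(g) \lesssim \sum_{\xi\in\Z^n} |\xi|^s |\hat g(\xi)|^2 \sim \|g\|_{\dH^{s/2}}^2,
\]
which is the claimed estimate.

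I do not foresee a serious obstacle here: the only subtle point is ensuring that the geometric twist $z\mapsto |z|U_z\theta$ does not introduce cancellations or singularities that evade the pointwise phase bound, but it does not, precisely because orthogonality of $U_z$ absorbs all angular information into the scalar product $|\xi|\,|z|$. A more hands-on alternative would be a direct change of variables $z\mapsto |z|U_z\theta$, but verifying smoothness and bounded Jacobian of the induced map on $S^{n-1}$ is delicate in dimensions $n\geq 3$ because $U_z$ is only defined up to the stabilizer $O(n-1)$; the Fourier approach sidesteps this ambiguity entirely and is the route I would take.
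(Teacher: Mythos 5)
Your proposal is correct and follows essentially the same route as the paper: Plancherel/Fourier expansion in $x$, the pointwise phase bound $|e^{i\bk\cdot|z|U_z\th}-1|\leq\min\{2,|\bk||z|\}$ (which uses only orthogonality of $U_z$ and boundedness of $\th$), and splitting the $z$-integral at scale $1/|\bk|$ to get the symbol bound $|\bk|^s$. Your remark that the Fourier approach sidesteps the change-of-variables issues with $z\mapsto|z|U_z\th$ is exactly the reason the paper takes this route as well; note only that the stated inequality should be read with $\|g\|_{\dH^{s/2}}^2$ on the right (up to a constant), as your computation correctly produces.
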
 
\begin{proof}
	\[
\int_{\mathbb{T}^{2n}} \frac{h(z)  \left|g\left(x+ |z| U_z \th   \right)- g (x) \right|^2 }{|z|^{n+s} }  \dz \dx  = \sum_{\bk \in \Z^n} |\widehat{g}(\bk)|^2 \int_{\mathbb{T}^{n}} \frac{h(z)  \left| e^{i \bk \cdot  |z| U_z \th}- 1 \right|^2 }{|z|^{n+s} }  \dz.
\]
Since
\[
\left| e^{i \bk \cdot |z| U_z \th}- 1 \right| \leq \min\{2, |\bk| |z|\}
\]
the splitting of the integral into small scale $|z| <1/|\bk|$ and large scale $|z| > 1/|\bk|$  as in the classical case, shows that the integral is bounded by $|\bk|^s$ which implies the claim. 
\end{proof}
Similar goes the proof of the next lemma
\begin{lemma}\label{l:Sob2} For any $0<\a<2$,
\[
\left\| \int_{\T^n}  [g(\cdot+|z|U_z \th)+g(\cdot-|z|U_z \th) - 2g(\cdot)]    \frac{h(|z|) z_i U_z^{j k }}{|z|^{n+\alpha+1}}  dz  \right\|_2  \leq \| g\|_{\dH^\a}.
\]
\end{lemma}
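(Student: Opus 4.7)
My plan is to mirror the Fourier approach from the proof of \lem{l:SobU}. Writing $g = \sum_{\bk \in \Z^n} \hat g(\bk) e^{i\bk \cdot x}$, applying Plancherel, and collecting the exponential shifts induced by the symmetric pair of increments $g(\,\cdot\, \pm |z| U_z \th)$, the claim reduces to the symbol-level bound
\begin{equation*}
|K(\bk)| := \left| \int_{\T^n} \bigl( e^{i\bk \cdot |z| U_z \th} + e^{-i\bk \cdot |z| U_z \th} - 2 \bigr) \frac{h(|z|)\, z_i\, U_z^{jk}}{|z|^{n+\alpha+1}} \dz \right| \lesssim |\bk|^\alpha,
\end{equation*}
since then $\| \cdots \|_2^2 = \sum_{\bk} |\hat g(\bk)|^2 |K(\bk)|^2 \lesssim \sum_{\bk} |\bk|^{2\alpha} |\hat g(\bk)|^2 = \|g\|_{\dH^\alpha}^2$, which is exactly the claim.

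The key observation is that the symmetric second-order difference supplies a quadratic Taylor bound on the exponential factor: $|e^{ia} + e^{-ia} - 2| = 4 \sin^2(a/2) \leq \min\{4, a^2\}$. Applied with $a = \bk \cdot |z| U_z \th$, together with the orthogonality of $U_z$ and the uniform boundedness of $|\th|$ on $\p \O(0,\be_1)$, this yields $|e^{i\bk \cdot |z| U_z \th} + e^{-i\bk \cdot |z| U_z \th} - 2| \lesssim \min\{1,  |\bk|^2 |z|^2\}$. The amplitude of the kernel satisfies the trivial bound $|z_i\, U_z^{jk}|/|z|^{n+\alpha+1} \lesssim |z|^{-n-\alpha}$, so no extra odd/even symmetry in $z$ will be needed.

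From here the argument is a standard Littlewood--Paley--style scale split at the critical radius $|z| \sim 1/|\bk|$. On the small-scale piece $|z| < 1/|\bk|$ the Taylor bound gives
\begin{equation*}
|\bk|^2 \int_{|z|<1/|\bk|} \frac{\dz}{|z|^{n+\alpha-2}} \lesssim |\bk|^\alpha,
\end{equation*}
which converges precisely because $\alpha < 2$; on the large-scale piece $|z| > 1/|\bk|$ the trivial bound $\min\{\cdot\} \leq 4$ gives
\begin{equation*}
\int_{1/|\bk| < |z| < r_0} \frac{\dz}{|z|^{n+\alpha}} \lesssim |\bk|^\alpha,
\end{equation*}
which converges precisely because $\alpha > 0$. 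This produces $|K(\bk)| \lesssim |\bk|^\alpha$ uniformly in $\bk$, and Plancherel closes the estimate. I do not foresee any serious obstacle: matching the two scale regimes is the only subtle point, and it is exactly what forces the hypothesis $0<\alpha<2$, in direct parallel with \lem{l:SobU}. In particular, I do not expect to need the $z \mapsto -z$ antisymmetry of the factor $z_i U_z^{jk}$, since the symmetric second-order increment already supplies all the cancellation required at small scales.
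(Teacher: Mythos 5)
Your proposal is correct and follows essentially the same route as the paper's proof: Plancherel reduces the claim to the symbol bound $|K(\bk)| \lesssim |\bk|^\alpha$, the symmetric second difference of exponentials is bounded by $\min\{3, |\bk|^2|z|^2\}$, the factor $z_i U_z^{jk}/|z|$ is absorbed to leave a kernel of order $|z|^{-n-\alpha}$, and the split at $|z| \sim 1/|\bk|$ yields $|\bk|^\alpha$ using $0<\alpha<2$. Your remark that the $z\mapsto -z$ antisymmetry of $z_i U_z^{jk}$ is not needed is consistent with the paper, which likewise discards it by passing to absolute values.
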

\begin{proof}
\[
\begin{split}
& \left\| \int_{\T^n}  [g(\cdot+|z|U_z \th)+g(\cdot-|z|U_z \th) - 2g(\cdot)]    \frac{h(|z|) z_i U_z^{j k }}{|z|^{n+\alpha+1}}  dz  \right\|_2^2 =\\
&= \sum_{\bk \in \Z^n} |\widehat{g}(\bk)|^2    \left|\int_{\mathbb{T}^{n}}  (e^{i \bk \cdot  |z| U_z \th}+e^{-i \bk \cdot  |z| U_z \th}- 2)  \frac{h(|z|) z_i U_z^{j k }}{|z|^{n+\alpha+1}}  dz \right|^2\\
& \leq  \sum_{\bk \in \Z^n} |\widehat{g}(\bk)|^2 \left|  \int_{\mathbb{T}^{n}}  |e^{i \bk \cdot  |z| U_z \th}+e^{-i \bk \cdot  |z| U_z \th}- 2|  \frac{h(|z|)}{|z|^{n+\alpha}}  dz \right|^2.
\end{split}	
\]
The integral is estimated with the use of 
\[
|e^{i \bk \cdot  |z| U_z \th}+e^{-i \bk \cdot  |z| U_z \th}- 2|  \leq  \min\{3,|z|^2|\bk|^2 \}
\]
and splitting as before into $|z| <1/|\bk|$ and  $|z| > 1/|\bk|$. The result is $|\bk|^\a$ and the formula follows.
\end{proof}

%\bibliographystyle{plain}
%\bibliography{collective-pure,collective-appl,fractional}

\end{document}